\newcommand{\intav}[1]{\mathchoice {\mathop{\vrule width 6pt height 3 pt depth  -2.5pt
\kern -8pt \intop}\nolimits_{\kern -6pt#1}} {\mathop{\vrule width
5pt height 3  pt depth -2.6pt \kern -6pt \intop}\nolimits_{#1}}
{\mathop{\vrule width 5pt height 3 pt depth -2.6pt \kern -6pt
\intop}\nolimits_{#1}} {\mathop{\vrule width 5pt height 3 pt depth
-2.6pt \kern -6pt \intop}\nolimits_{#1}}}
\def\polhk#1{\setbox0=\hbox{#1}{\ooalign{\hidewidth\lower1.5ex\hbox{`}\hidewidth\crcr\unhbox0}}}
\def\XXint#1#2#3{{\setbox0=\hbox{$#1{#2#3}{\int}$ }
\vcenter{\hbox{$#2#3$ }}\kern-.6\wd0}}
\newcommand{\osc}{\operatorname{osc}}
\newtheorem{Theorem}{Theorem}[section]
\newtheorem{Definition}{Definition}[section]
\newtheorem{Lemma}{Lemma}[section]
\newtheorem{Proposition}{Proposition}[section]
\newtheorem{Remark}{Remark}[section]
\newtheorem{Assumption}{A}
\begin{document}


\title{Regularity estimates for fully nonlinear integro-differential equations with nonhomogeneous degeneracy}

\author[1]{P\^edra D. S. Andrade
\footnote{pedra.andrade@tecnico.ulisboa.pt}}
\author[2]{Disson S. dos Prazeres
\footnote{disson@mat.ufs.br}}
\author[3]{Makson S. Santos
\footnote{ makson.santos@tecnico.ulisboa.pt}}	
\affil[1]{\scriptsize Instituto Superior T\'ecnico, Universidade de Lisboa}
\affil[2]{\scriptsize Departamento de Matem\'atica, Universidade Federal de Sergipe}
\affil[3]{\scriptsize Instituto Superior T\'ecnico, Universidade de Lisboa}
\maketitle

\begin{abstract} We investigate the regularity of the solutions for a class of degenerate/singular fully nonlinear nonlocal equations.  In the degenerate scenario, we  establish that there exists at least one viscosity solution of class $C_{loc}^{1, \alpha}$,  for some constant $\alpha \in (0, 1)$. In addition, under suitable conditions on $\sigma$, we prove regularity estimates in H\"older spaces for any viscosity solution.  We also examine the singular setting and prove H\"older regularity estimates for the gradient of the solutions.
\noindent    
\medskip

\noindent \textbf{Keywords}: Nonlocal operators, H\"older regularity, Singular operators, Degenerate operators.

\medskip

\noindent \textbf{MSC(2020)}: 35B65; 35R11; 35R09; 35D40.
\end{abstract}

\vspace{.1in}

\section{Introduction}
In this paper, we examine the regularity of solutions to degenerate/singular fully nonlinear integro-differential equations of the form
\begin{equation}\label{main_eq0}
	- \left(|Du(x)|^p + a(x)|Du(x)|^q \right){\mathcal I}_{\sigma}(u, x)= f(x) \quad \text{in} \quad B_1, 
\end{equation}
where $0\leq a(\cdot) \in C(B_1)$, ${\mathcal I}_{\sigma}$ is a fully nonlinear elliptic integro-differential operator with order $\sigma \in(1, 2)$ and $f \in L^{\infty}(B_1)$. We establish H\"older regularity estimates for the gradient of bounded viscosity solutions to \eqref{main_eq0}. More precisely, in the degenerate setting $0 < p \leq q$, we show that for any $1<\sigma<2$, there exists at least one viscosity solution $u$ to \eqref{main_eq0}, so that $u \in C_{loc}^{1,\alpha}(B_1)$ for some $\alpha \in (0,1)$. This result is new even in the case $a \equiv 0$. Moreover, we prove the existence of a constant $\sigma_0$, sufficiently close to $2$, for which solutions are of class $C_{loc}^{1,\alpha}$ for some $\alpha \in (0,1)$, provided $\sigma_0<\sigma <2$. In the singular framework $-1 < p \leq q <0$,  we show that solutions belong to $C_{loc}^{1,\alpha}(B_1)$ for some $\alpha \in (0,1)$. 

In the literature, integro-differential operators are commonly referred to as nonlocal operators. This class of problems appears in many mathematical modeling processes, such as image processing and payoff models, see \cite{Bisci-Radulescu-Servadei-2016, Bucur-Valdinoci2016,Nezza-Palatucci-Valdinoci-2012}, to mention just a few. For instance, linear nonlocal operators arise naturally in discontinuous stochastic processes taking the form
\begin{equation}\label{Eq-integral operator}
	Lu(x):=\sum_{i,j}a_{ij}\partial_{ij} u + \sum_{i}b_i\partial_i u + \int_{\mathbb{R}^d}\left(u(x+y) -u(x) - Du\cdot y\chi_{B_1}(y)\right)d\mu(y), 
\end{equation}
for a suitable measure $\mu$. Meanwhile, nonlinear integro-differential equations can be found in stochastic control problems, where a player can change strategies at every step. In this case, we end up with convex nonlinear equations driven by the operator
\[
Iu(x) := \sup_\alpha L_\alpha u(x).
\]
In addition, we can also obtain models of the type
\[
Iu(x) := \inf_\beta\sup_{\alpha} L_{\alpha,\beta} u(x),
\]
whenever two or more players are involved, see \cite{Soner1986}. 

Integro-differential operators have been extensively studied over the years by many authors. Regarding the qualitative properties of the solutions, the authors in \cite{Bass-Kassmann2005, Bass-Levin2002, Song-Vondavcek2004} established Harnack inequality by using probabilistic methods. It is important to mention that H\"older estimates do not follow from the Harnack inequality, since it requires the solution to be positive in the whole $\mathbb{R}^d$. Keeping this in mind, H\"older estimates were proved in \cite{Bass-Kassmann2005}, still using probabilistic techniques. In \cite{Silvestre2006} L. Silvestre provides an analytical proof of H\"older continuity, and also gives more flexible assumptions on the operator than the previous works. 

All these previous results enjoy the same feature: the estimates blow up as the order of the operator approaches 2. The first results that are uniform in the degree were given by L. Caffarelli and L. Silvestre in \cite{Caffarelli-Silvestre2009}. In that paper, the authors produced a series of results that extended the theory for second-order operators, such as a comparison principle, a nonlocal version of the Alexandroff-Bakelman-Pucci estimate, a Harnack inequality, and $C^{1,\alpha}$-estimates. Since the estimates are uniform as the degree of the operator goes to 2. These results can be seen as a natural extension of the regularity theory for elliptic PDEs. 

In \cite{Caffarelli-Silvestre2011} the authors extended their previous results to nontranslation-invariant equations using perturbative methods. In particular, they prove $C^{1,\alpha}$-regularity for solutions of equations that are close, in an appropriate sense, to another one with $C^{1,\bar{\alpha}}$-estimates. More or less simultaneously G. Barles, E. Chasseigne, and C. Imbert also dealt with nontranslation-invariant equations establishing H\"older regularity estimates. It is worth noticing that their assumptions are different from the ones in \cite{Caffarelli-Silvestre2011}, and the equations that they work with involve second and first-order terms, allowing some degenerate operators. For more details, see \cite{Barles-Chasseigne-Imbert2011}. 

A very interesting question that remains widely open is whether or not the solutions of 
\begin{equation}\label{eq_dosprazeres-topp}
	|Du|^p{\cal I}_{\sigma}(u,x) = f(x)\;\;\mbox{ in }\;\;B_1,  
\end{equation} 
are of class $C^{1, \alpha}$. In the local case, this type of problem drew the attention of many authors, see for instance \cite{Andrade-Pellegrino-Pimentel-Teixeira2022, Araujo-Ricarte-Teixeira2015, Birindelli-Demengel-2007, Birindelli-Demengel2014, Bronzi-Pimentel-Rampasso-Teixeira2020, Davila-Felmer-Quaas-2010, Imbert-Silvestre2013} just to cite a few. More generally, equations with double degeneracy of the form
\begin{equation}\label{eq_dd}
	(|Du|^p + a(x)|Du|^q)F(D^2u) = f(x)\;\;\mbox{ in }\;\;B_1,
\end{equation}
with $0 < p \leq q$ were also investigated. We refer the reader to \cite{DaSilva-Junior-Rampasso-Ricarte2021, DeFilippis2021,DeFillipis-Palatucci2019}, and the references therein. The main novelty in those works is that equation \eqref{eq_dd} is no longer homogeneous, which makes the scaling process more delicate. We stress that the operator in \eqref{eq_dd} is a nonvariational  counterpart of the extensively studied $(p,q)$-Laplacian, see for instance \cite{Colombo-Mingione-15,  DeFilippis-20, DeFilippis-21, DeFilippis-Mingione-2-21, DeFilippis-Mingione-21, DeFilippis-Palatucci-19, Mingione-Radulescu-21}.

In the nonlocal case, a partial result was given by D. Prazeres and E. Topp in \cite{Prazeres-Topp2021}. The authors suppose that $\sigma$ is sufficiently close to 2,  so that in the interactive process ${\mathcal I}_\sigma$ approaches $F$, where $F$ is a local operator. Under these assumptions, they prove that viscosity solutions to \eqref{eq_dosprazeres-topp}, with $p>0$, are of class $C^{1,\alpha}$. Their condition on $\sigma$ allows them to import some regulatity of the $F$-harmonic functions to the solutions of \eqref{eq_dosprazeres-topp}. In the general case, $\sigma \in (1, 2)$, there are not regularity results available for the solution of \eqref{eq_dosprazeres-topp}. One of our contribution in this manuscript is to show that the existence of a $C^{1, \alpha}_{loc}$ viscosity solution for a class of operators that includes the one in \eqref{eq_dosprazeres-topp}, for any $\sigma \in (1,2)$. This is the content of our first main result.

\begin{Theorem}\label{Degenerate Theorem at least one solution}
	Let $0<p\leq q$. Suppose A\ref{assump_degree}-A\ref{assump_regularf}, to be detailed further, hold true. Then, there exists  at least one $u\in C(\overline{B}_1)$ bounded viscosity solution to 
	\begin{equation*}
		- \left(|Du(x)|^p + a(x)|Du(x)|^q \right){\mathcal I}_{\sigma}(u, x)= f(x) \quad \text{in} \quad B_1,
	\end{equation*}
	such that $u \in C_{loc}^{1,\alpha}(B_1)$ with the estimate
	\[
	\|u \|_{C^{1,\alpha}(B_{1/2})} \leq C \left(\|u \|_{L^{\infty}(B_1)}+ \|u\|_{L^1_\sigma(\mathbb{R}^d)}+\| f\|^{\frac{\sigma - 1}{1+p}}_{L^{\infty}(B_1)} \right),
	\]
	where
	\[
	\alpha  \in \left(0, \min\left(\bar{\alpha}, \dfrac{\sigma - 1}{p+1} \right) \right),
	\]
	and $C = C(d, p, \lambda, \Lambda)$ is a positive constant. Here, $\bar\alpha$ is the exponent associated with the regularity of ${\mathcal I}_{\sigma}$ harmonic functions.
\end{Theorem}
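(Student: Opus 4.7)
The plan is to construct the solution via an approximation scheme that removes the degeneracy in the prefactor $(|Du|^p+a(x)|Du|^q)$, derive $C^{1,\alpha}$ estimates uniform in the approximation parameter, and then pass to the limit. First I would regularize the equation by considering
$$-\bigl(\varepsilon+|Du_\varepsilon|^p+a(x)|Du_\varepsilon|^q\bigr)\,\mathcal{I}_\sigma(u_\varepsilon,x)=f(x) \quad\text{in } B_1,$$
with fixed continuous exterior data. For each $\varepsilon>0$, existence of a bounded continuous viscosity solution $u_\varepsilon$ follows from Perron's method together with the comparison principle for uniformly elliptic nonlocal operators (applied to the equivalent form obtained by dividing by the positive prefactor), and a priori $L^\infty$ bounds uniform in $\varepsilon$ come from an ABP-type estimate of Caffarelli--Silvestre type.

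The core step is to prove a $C^{1,\alpha}$ estimate on $u_\varepsilon$ uniform in $\varepsilon$, via a tangential approximation argument adapted to the degenerate setting. For a fixed point $x_0\in B_{1/2}$ I would inductively construct affine functions $\ell_k(x)=a_k+b_k\cdot(x-x_0)$ satisfying
$$\sup_{B_{\rho^k}(x_0)}|u_\varepsilon-\ell_k|\leq \rho^{k(1+\alpha)},\qquad |a_{k+1}-a_k|+\rho^k|b_{k+1}-b_k|\leq C\rho^{k(1+\alpha)},$$
for some $\rho\in(0,1)$. The inductive step relies on a dichotomy on $|b_k|$: in the small-slope regime the rescaled, normalized function $v_k(y)=\rho^{-k(1+\alpha)}(u_\varepsilon-\ell_k)(x_0+\rho^k y)$ solves an equation with small source term and, by a compactness/stability argument, is close to an $\mathcal{I}_\sigma$-harmonic function, whose $C^{1,\bar\alpha}$ regularity at the origin produces $\ell_{k+1}$; in the large-slope regime, after dividing the equation through by $|b_k|^p+a(x_0)|b_k|^q$, the rescaled problem becomes uniformly nonlocally elliptic with small right-hand side, and the same tangential approximation applies. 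The rescaling consistent with the homogeneity of the equation is the one that forces the factor $\frac{\sigma-1}{p+1}$ to appear, competing with $\bar\alpha$ to give the stated range of $\alpha$.

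The main obstacle in this iteration is controlling the nonlocal tails across scales. Because the affine functions $\ell_k$ do not decay at infinity, the tails of $v_k$ cannot be discarded, and they carry information from $u_\varepsilon$ on all of $\mathbb{R}^d$; these contributions must be treated as bounded perturbations whose size is dominated by $\|u_\varepsilon\|_{L^1_\sigma(\mathbb{R}^d)}$ and shown to be compatible with the flatness gained at each step. This is where the $L^1_\sigma$-norm on the right-hand side of the estimate originates, and where careful exploitation of the structure of the admissible class of operators $\mathcal{I}_\sigma$ is essential.

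Finally, once the uniform $C^{1,\alpha}$ estimate on $u_\varepsilon$ is in hand, Arzel\`a--Ascoli produces a subsequence converging in $C^1_{loc}(B_1)$ to some $u\in C^{1,\alpha}_{loc}(B_1)$, while the uniform $L^1_\sigma$ control provides the convergence of the nonlocal tails required to pass to the limit. Stability of viscosity solutions for nonlocal equations then identifies $u$ as a viscosity solution of the original degenerate equation, producing the claimed solution with the desired estimate.
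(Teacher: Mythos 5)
Your overall architecture (regularize the degenerate prefactor, prove estimates uniform in the regularization, pass to the limit) is the same as the paper's, but there is a genuine gap at the heart of your scheme: the claim that, in the small-slope regime, the rescaled function is close to an $\mathcal{I}_\sigma$-harmonic function ``by a compactness/stability argument.'' To make your estimate uniform in $\varepsilon$, the contradiction/compactness sequence must allow $\varepsilon_j\to 0$ simultaneously with $\|f_j\|_{L^\infty}\to 0$, and solutions of
\[
-\bigl(\varepsilon_j+|Dv_j+\xi_j|^p+a_j(x)|Dv_j+\xi_j|^q\bigr)\mathcal{I}_\sigma(v_j,x)=f_j
\]
have a prefactor that is no longer bounded below; you cannot divide through, and at a touching point where the test-function gradient (plus $\xi_j$) vanishes the equation gives no information on $\mathcal{I}_\sigma$. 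For general $\sigma\in(1,2)$ there is no known stability statement asserting that the limit is $\mathcal{I}_\sigma$-harmonic: this is precisely the obstruction (absence of comparison/uniqueness, no nonlocal analogue of the cutting-lemma techniques from the local case) that keeps the regularity of \emph{arbitrary} viscosity solutions open, as the paper points out in Remark \ref{rem_3}. If your uniform-in-$\varepsilon$ $C^{1,\alpha}$ estimate for an arbitrary, uncoupled regularization went through, it would essentially settle that open problem, which is a strong sign the step as stated cannot be justified by ``standard'' stability.

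The paper's way around this is exactly the device you are missing: the notion of approximated viscosity solution (Definition \ref{new_def}) couples the regularization parameter to the smallness of the source, requiring $jc_j^p\to\infty$ while $\|f_j\|_{L^\infty}\le 1/j$, so that in the stability proof one can estimate $\|f_j\|_{L^\infty}/(|Du_j+\xi_j|+c_j)^p\le 1/(jc_j^p)\to 0$ and conclude the limit solves $\mathcal{I}_\sigma(u,x)=0$ (Proposition \ref{prop_stability}); the improvement-of-flatness (Lemma \ref{approx_lemma2} and Propositions \ref{Deg_Step1}--\ref{Deg_step2}) is then run only for such limits, which is why the theorem asserts existence of \emph{at least one} $C^{1,\alpha}_{loc}$ solution rather than regularity of every solution -- a distinction your proposal blurs. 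Two secondary points: before any flatness iteration you need a compactness estimate uniform in the regularization parameter (the paper proves uniform Lipschitz bounds via an Ishii--Lions doubling argument with the Barles--Chasseigne--Ciomaga--Imbert kernel estimates, Lemma \ref{Lipschitz_Regularity2}, also for the $\xi$-shifted equation, Lemmas \ref{Lipschitz_Regularity}--\ref{Lip_reg_2}); and the nonlocal tails in the iteration are controlled not merely as perturbations bounded by $\|u\|_{L^1_\sigma}$ but by propagating the growth bound $|v_k(x)|\le 1+|x|^{1+\bar\alpha}$ through the rescalings, which is what keeps the approximation lemma applicable at every scale. Your identification of the exponent $\tfrac{\sigma-1}{p+1}$ from the scaling of the source is correct.
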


We recall that a general regularity theory for viscosity solutions to (\ref{main_eq0}) when $0<p\leq q$ remains open, even in the case where $a\equiv0$. This happens because of the lack of an uniqueness result, which is the core of the arguments for the general regularity theory in local case. See also Remark \ref{rem_3}. 

In order to prove a general regularity result we need to restrict  ourselves to the case where $\sigma$ is close to 2, as in \cite{Prazeres-Topp2021}. In particular in extend the results in \cite{Prazeres-Topp2021} to a nonhomogeneous degeneracy setting. This is the content of our second result.

\begin{Theorem}\label{Degenerate Theorem}
	Let $u \in C(\overline{B}_1)$ be a bounded viscosity solution to
	\begin{equation*}
		- \left(|Du(x)|^p + a(x)|Du(x)|^q \right){\mathcal I}_{\sigma}(u, x)= f(x) \quad \text{in} \quad B_1,
	\end{equation*}
	with $0<p\leq q$. Suppose A\ref{assump_degree}-A\ref{assump_condition_on_kernels}, to be detailed further, hold true. Then, there exists $\sigma_0 \in (1, 2)$ sufficiently close to 2 such that if $\sigma_0 < \sigma < 2$, then $u \in C_{loc}^{1,\alpha}(B_1)$ with the estimate
	\[
	\|u \|_{C^{1,\alpha}(B_{1/2})} \leq C \left(\|u \|_{L^{\infty}(B_1)} + \|u\|_{L^1_\sigma(\mathbb{R}^d)}+\| f\|^{\frac{\sigma - 1}{1+p}}_{L^{\infty}(B_1)} \right),
	\]
	where
	\[
	\alpha  \in \left(0, \min\left(\bar{\alpha}, \dfrac{\sigma - 1}{p+1} \right) \right),
	\]
	and $C = C(d, p, \lambda, \Lambda)$ is a positive constant. Here, $\bar\alpha$ is the exponent associated with the regularity of $F$ harmonic functions.
\end{Theorem}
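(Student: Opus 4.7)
\emph{Strategy.} My plan is to adapt the perturbative scheme of Prazeres--Topp, which handles the homogeneous case $|Du|^p\mathcal{I}_\sigma(u,x)=f$, to the nonhomogeneous prefactor $|Du|^p+a(x)|Du|^q$. The guiding idea is that when $\sigma$ is sufficiently close to $2$, the operator $\mathcal{I}_\sigma$ is close in the viscosity sense to a uniformly elliptic local operator $F$; since $F$-harmonic functions are of class $C^{1,\bar\alpha}$, affine functions approximate them at a quantitative rate, and this regularity can be imported to solutions of our equation by an iterative flatness-improvement scheme.

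\emph{Normalization and approximation lemma.} First I would reduce, via a scaling tuned to the order $\sigma$ and the exponent $p$, to the setting $\|u\|_{L^\infty}\leq 1$, $\|u\|_{L^1_\sigma(\mathbb{R}^d)}\leq 1$, $\|f\|_{L^\infty}\leq\delta$ for any prescribed small $\delta>0$; the exponent $(\sigma-1)/(p+1)$ appearing in the statement is exactly what this rescaling produces. Then I would prove an approximation lemma: for every $\varepsilon>0$ there exist $\delta>0$ and $\sigma_0\in(1,2)$ such that any normalized solution with $\sigma\in(\sigma_0,2)$ and $\|f\|_\infty\leq\delta$ is $\varepsilon$-close in $L^\infty(B_{3/4})$ to some $F$-harmonic function $h$. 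The argument is the standard compactness/contradiction: sequences $(u_n,f_n,\sigma_n)$ violating the claim, with $\sigma_n\to 2$ and $f_n\to 0$, would, by the H\"older estimates of Caffarelli--Silvestre, admit a uniformly convergent subsequence whose limit, by stability of viscosity solutions combined with $\sigma_n\to 2$, satisfies $F(D^2 u_\infty)=0$, yielding the contradiction.

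\emph{Affine approximation and iteration.} Since $h\in C^{1,\bar\alpha}$ there exists an affine $\ell(x)=h(0)+Dh(0)\cdot x$ with $\sup_{B_\rho}|h-\ell|\leq C\rho^{1+\bar\alpha}$, and combining this with the approximation lemma yields $\sup_{B_\rho}|u-\ell|\leq\rho^{1+\alpha}$ for suitable $\rho$ and any $\alpha$ in the claimed range. To iterate I would work not with a single equation but inside the parametric family
\[
-\bigl(|q+Dv|^p+a(x)|q+Dv|^q\bigr)\mathcal{I}_\sigma(v,x)=g(x),\qquad q\in\mathbb{R}^d,
\]
obtained after subtracting affine corrections from $u$, and upgrade the approximation lemma so that it holds uniformly in $q$ (the case of large $|q|$ being easier, since the prefactor is then bounded below and the equation is uniformly elliptic). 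A standard induction then yields affine functions $\ell_k$ with gradients $p_k$ satisfying $\sup_{B_{\rho^k}}|u-\ell_k|\leq\rho^{k(1+\alpha)}$; the restriction $\alpha<(\sigma-1)/(p+1)$ enters precisely here, ensuring that the rescaled right-hand side remains below $\delta$ at every step. Telescoping, together with translation and a covering argument, then gives the desired estimate on $B_{1/2}$.

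\emph{Main obstacle.} The principal difficulty is the lack of affine invariance of the prefactor: $|Du|^p+a(x)|Du|^q$ is not preserved when subtracting an affine function, so the iteration must be carried out inside the broader parametric class above, with the structural hypotheses verified uniformly in $q$ and in the rescaling parameter. Controlling the nonlocal tail $\|v\|_{L^1_\sigma(\mathbb{R}^d)}$ across the induction is a further subtle point, since naive rescalings amplify far-field values; this will be handled by incorporating the tail into the initial normalization and tracking it through every step.
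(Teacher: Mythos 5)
Your overall scheme is the same as the paper's: uniform compactness, an approximation lemma saying that for $\sigma$ close to $2$ and small $f$ the solution is close to an $F$-harmonic profile, and then a flatness-improvement iteration carried out in the gradient-shifted family $-\left(|Dv+\xi|^p+a(x)|Dv+\xi|^q\right)\mathcal{I}_\sigma(v,x)=g$, with the restriction $\alpha<\tfrac{\sigma-1}{p+1}$ keeping the rescaled source small. However, there are two genuine gaps at exactly the points where the paper has to work hardest. First, the passage to the limit equation $F(D^2u_\infty)=0$ in the approximation lemma is not a routine viscosity-stability statement: the prefactor $|Du+\xi|^p+a(x)|Du+\xi|^q$ is multiplied against $\mathcal{I}_\sigma$, so whenever the test function's gradient at the touching point nearly cancels $\xi_j$ (the case $\xi_\infty+b=0$, including $\xi_\infty=b=0$), the viscosity inequality degenerates and gives essentially no information on $\mathcal{I}_{\sigma_j}$; one cannot divide by the prefactor and send $j\to\infty$. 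You acknowledge that the large-shift case is easy, but this small-gradient case is precisely the crux, and it requires an extra device: the paper perturbs the quadratic test function by a cusp $k|\Pi_\Gamma(x)|$, where $\Gamma$ is spanned by the eigenspaces of the nonnegative eigenvalues of $M$, and runs a case analysis ($\Pi_\Gamma(\tilde{x}_\infty)=0$ or not, $|M\tilde{x}_\infty|=0$ or not, $\Gamma=\mathbb{R}^d$ or not) to force the touching gradient away from zero and conclude $-F_\infty(M)\geq 0$. Without this (or an equivalent nonlocal adaptation of the Imbert--Silvestre type argument), the contradiction argument in your approximation lemma does not close.

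Second, your compactness step invokes the H\"older estimates of Caffarelli--Silvestre, but those apply to uniformly elliptic nonlocal equations, whereas here one only knows $-\left(|Du_n+\xi_n|^p+a_n|Du_n+\xi_n|^q\right)\mathcal{I}_{\sigma_n}(u_n,x)=f_n$, which gives no $L^\infty$ bound on $\mathcal{I}_{\sigma_n}(u_n,\cdot)$ near degeneracy points. The paper instead proves Lipschitz estimates directly for the shifted degenerate equation, uniform in $\sigma$ and in $\xi$ (splitting into $|\xi|\geq c_0$ and $|\xi|\leq c_0$), via an Ishii--Lions doubling argument with the nonlocal cone estimates of Barles--Chasseigne--Ciomaga--Imbert; some such uniform estimate must be supplied, since the iteration reintroduces shifts $\xi=\rho^k b_k$ of every size and the constants must not blow up as $\sigma\to 2$. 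Your closing remarks about the lack of affine invariance and the control of the tail $\|v\|_{L^1_\sigma}$ are correct and are handled in the paper exactly as you suggest (parametric family in $\xi$, and the growth bound $|v_k(x)|\leq 1+|x|^{1+\bar\alpha}$ propagated through the induction).
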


The constant $C$ that appears in the estimate above is uniform in $\sigma$ which means that it does not blow up as $\sigma$ approaches 2. Hence, we can see Theorem \ref{Degenerate Theorem} as an extension of \cite[Theorem 1]{DeFilippis2021}. 

In the singular case the difficulty starts with the notion of viscosity solution. Indeed, the definition considered in the local case \cite[Definition 2.2]{BD2006}, seems not to be suitable in our scenario, because whenever $x_0 \in \{Du=0\}$, we have that $u \equiv c$ in  $B_{r(x_0)}(x_0)$, which implies that $F(D^2u) = 0$ in $B_{r(x_0)}(x_0)$ and hence one could say that $0 \leq f(x_0)$ (in the case of supersolutions). On the other hand, in the nonlocal case, we still need to consider the quantity
\[
\int_{\mathbb{R}^{d}\setminus B_{r(x_0)}(x_0)}(u(x_0) - u(y))K(y)dy,
\]
which depends on $r(x_0)$ and may not vanish. To overcome this, we use the notion of {\it approximated viscosity solution}, which coincides with the usual notion of viscosity solution over the set $\{ Du \not=0 \}$, and are defined as the limit of the solutions of uniformly elliptic nonlocal equations . See Definition \ref{new_def} for the precise notion of approximated viscosity solution. Under this setting, we  establish $C^{1, \alpha}$-regularity for solutions of \eqref{main_eq0}. More precisely, we prove the following: 


\begin{Theorem}\label{Singular Theorem}
	Let $u \in C(\overline{B}_1)$ be a bounded approximated viscosity solution to
	\begin{equation*}
		- \left(|Du(x)|^p + a(x)|Du(x)|^q \right){\mathcal I}_{\sigma}(u, x)= f(x) \quad \text{in} \quad B_1,
	\end{equation*}
	with $-1<p\leq q<0$. Assume A\ref{assump_degree}-A\ref{assump_regularf}, to be detailed later, are in force. Then $u \in C_{loc}^{1,\alpha}(B_1)$ and we have the estimates
	\[
	\|u \|_{C^{1,\alpha}(B_{1/2})} \leq C \left(\|u \|_{L^{\infty}(B_1)} + \|u\|_{L^1_\sigma(\mathbb{R}^d)}+\| f\|^{\frac{\sigma - 1}{1+p}}_{L^{\infty}(B_1)} \right).
	\]
	for some  $\alpha \in (0, 1)$ and $C = C(d, p, \lambda, \Lambda)$ is a positive constant.
\end{Theorem}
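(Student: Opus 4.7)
The strategy I would follow closely mirrors the one behind Theorem \ref{Degenerate Theorem at least one solution}, but carried out at the level of the regularized problems that define the approximated viscosity solution. Fix a sequence $u^{\varepsilon}\in C(\overline{B}_1)$ of bounded viscosity solutions to the uniformly elliptic nonlocal equations
$$
-\left((|Du^{\varepsilon}|^2+\varepsilon^2)^{p/2}+a(x)(|Du^{\varepsilon}|^2+\varepsilon^2)^{q/2}\right){\mathcal I}_{\sigma}(u^{\varepsilon},x)=f(x) \quad \text{in}\quad B_1,
$$
whose limit, as $\varepsilon\to 0$, is $u$ by the very notion of approximated viscosity solution. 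The plan is to produce $C^{1,\alpha}$ estimates on $u^{\varepsilon}$ with constants independent of $\varepsilon\in(0,1)$, and then pass to the limit using Arzel\`a-Ascoli.

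The centerpiece is an \emph{approximation lemma}: after normalizing so that $\|u^{\varepsilon}\|_{L^{\infty}(B_1)}\le 1$, $\|u^{\varepsilon}\|_{L^{1}_{\sigma}(\mathbb{R}^d)}\le 1$ and $\|f\|_{L^{\infty}(B_1)}$ is as small as required, one shows that $u^{\varepsilon}$ is uniformly close to a function $h$ satisfying an extremal nonlocal equation $\mathcal{M}^{+}_{\sigma}h\ge 0\ge \mathcal{M}^{-}_{\sigma}h$, which is of class $C^{1,\bar\alpha}$ by \cite{Caffarelli-Silvestre2009}. I would prove this by contradiction and compactness: any failing sequence would converge to a limit $u_{\infty}$ that is not ${\mathcal I}_{\sigma}$-harmonic, contradicting stability. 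Here the singular regime $p<0$ plays a decisive role: whenever the gradient of a test function is small, the inverse weight $(|Du^{\varepsilon}|^2+\varepsilon^2)^{p/2}$ blows up while the right-hand side stays bounded, so ${\mathcal I}_{\sigma}(u^{\varepsilon},x)$ is forced to be small, which is precisely what allows the limit to be ${\mathcal I}_{\sigma}$-harmonic uniformly in $\varepsilon$ and in $\sigma\in(1,2)$.

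Next, I would iterate the approximation lemma at the discrete scales $\rho^{k}$ with $\rho\in(0,1)$ small and fixed. Setting
$$
v_{k}(x):=\frac{u^{\varepsilon}(\rho^{k}x)-\ell_{k}(\rho^{k}x)}{\rho^{k(1+\alpha)}},
$$
the rescaled function solves an equation of the same form with a rescaled source and modified coefficient $\tilde a(x)$; choosing $\alpha<(\sigma-1)/(1+p)$ keeps the source term controlled, while $\alpha<\bar\alpha$ keeps the affine approximation inside the scheme. The outcome is a sequence of affine functions $\ell_{k}$ approximating $u^{\varepsilon}$ at the polynomial rate $\rho^{k(1+\alpha)}$ uniformly in $\varepsilon$, which is exactly a pointwise $C^{1,\alpha}$ estimate at the origin; translation produces the estimate throughout $B_{1/2}$. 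Passing $\varepsilon\to 0$ along the approximating family yields the stated bound for $u$.

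The main obstacle, as I see it, is twofold. First, the nonhomogeneous structure introduced by the $a(x)|Du|^{q}$ term forces one to track carefully how $a(\cdot)$ transforms under rescaling: unlike the $p$-Laplacian type case, the rescaled coefficient $\tilde a$ changes along the iteration, and one must exploit $p\le q$ and the continuity of $a$ to ensure that the structure conditions A\ref{assump_degree}--A\ref{assump_regularf} are preserved at every scale. Second, the definition of approximated viscosity solution must interact cleanly with the approximation lemma; in particular, one needs to know that the $C^{1,\alpha}$ estimate on $u^{\varepsilon}$ does not degenerate as $\varepsilon\to 0$, which is what justifies extracting a limit in $C^{1,\alpha}$ rather than merely in $C^{1}$. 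Once these two technical points are set, the remaining work is standard within the Caffarelli-Silvestre framework.
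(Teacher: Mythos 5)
Your plan re-runs the degenerate-case machinery (compactness, approximation lemma, iteration of affine approximations at scales $\rho^k$), but this is not what the paper does in the singular range, and the paper's route is far simpler: by Lemma \ref{lip_reg_sing} the solutions $u_j$ of the regularized problems $-\bigl((|Du_j|+c_j)^p+a(x)(|Du_j|+c_j)^q\bigr)\mathcal{I}_\sigma(u_j,x)=f$ are locally Lipschitz uniformly in $j$; since $p<0$, an \emph{upper} bound $M$ on the gradient gives a positive \emph{lower} bound $(M+1)^p$ on the weight, so the viscosity inequalities yield $|\mathcal{I}_\sigma(u_j,\cdot)|\le (M+1)^{-p}\|f\|_{L^\infty(B_1)}$, i.e.\ each $u_j$ solves a uniformly elliptic nonlocal equation with bounded right-hand side. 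Proposition \ref{prop_ur} then invokes the Caffarelli--Silvestre $C^{1,\alpha}$ theory directly, with constants independent of $j$, and the theorem follows by letting $j\to\infty$ along the approximating sequence furnished by Definition \ref{new_def} (note also that the theorem concerns that specific sequence, with weight $(|Du_j|+c_j)^p$, not a regularization $(|Du|^2+\varepsilon^2)^{p/2}$ of your choosing). No blow-up iteration is needed.

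Beyond being heavier than necessary, your scheme as written has a genuine gap: the rescaled function $v_k$ does \emph{not} solve ``an equation of the same form''; it solves a shifted equation of type \eqref{eq_scaled} with $\xi_k=\rho^{-k\alpha}b_k$, and $|\xi_k|\to\infty$ unless $Du(0)=0$. In the singular regime a large shift is the bad direction: the weight $|Dv_k+\xi_k|^p$ is small at touching points, so the viscosity inequality only gives $|\mathcal{I}_\delta|\lesssim|\xi_k|^{-p}\|\tilde f_k\|_{L^\infty}$, and both the uniform Lipschitz/compactness estimate (which you assert under ``compactness'' but never establish) and the closeness-to-$\mathcal{I}_\sigma$-harmonic step deteriorate with $k$ unless the scaled source beats $|\xi_k|^{-p}$. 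Tracking exponents, $\|\tilde f_k\|_{L^\infty}|\xi_k|^{-p}\approx\rho^{k(\sigma-1-\alpha)}$, so your iteration requires $\alpha<\sigma-1$, not the claimed $\alpha<(\sigma-1)/(1+p)$, which is strictly larger when $-1<p<0$; the mechanism you call decisive (small gradients inflate the weight) only helps in the stability step, not where the difficulty actually sits. With these corrections the iteration could likely be carried out, but the observation you missed --- bounded gradient plus $p<0$ makes the equation uniformly elliptic with bounded right-hand side --- is exactly what lets the paper bypass it.
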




The remainder of this article is structured as follows: in the second section, we collect some auxiliary results and present our assumptions. In the third section, we investigate the H\"older regularity for the gradient of the solutions in the degenerate case. The last section is devoted to the proof of the regularity estimates in H\"older spaces for the singular scenario.

\section{Preliminaries}\label{sec_not}

In this section, we gather basic notions and detail our main assumptions used throughout this paper. In what follows, we present the definition of the nonlocal operator that we work with in this article.

\begin{Definition}
Consider $\sigma \in(1, 2)$, and constants $0<\lambda\leq\Lambda$. We define the family  ${\mathcal K}_0$, as the set of  measurable kernels $K:{\mathbb R}^d \setminus \left\{ 0  \right \}\rightarrow {\mathbb R}$ satisfying
\begin{equation*}\label{kernel_condition}
\lambda\dfrac{C_{d,\sigma}}{|x|^{d+\sigma}}\leq K(x)\leq \Lambda\dfrac{C_{d,  \sigma}}{|x|^{d+\sigma}}, 
\end{equation*}
where $C_{d,\sigma}>0$ is a normalizing constant.
\end{Definition}

\begin{Definition}\label{nonlocal operator0}
Given $u:\mathbb{R}^d \rightarrow \mathbb{R}$, $\Omega \subseteq \mathbb{R}^d$ a measurable set and $K\in \mathcal{K}_0$, we define the operator $I_K$ as
\begin{equation}\label{nonlocal operator}
I_{K}[\Omega](u, x) := C_{d,\sigma}\mbox{P.V.} \displaystyle\int_{\Omega}\left( u(x+y) - u(x) \right)K(y)\; {\bf d}y,
\end{equation}
where $\mbox{P.V.}$ denotes the Cauchy principal value of the integral.
\end{Definition}
\begin{Definition}
We say that $u:\mathbb{R}^d \rightarrow \mathbb{R}$ belongs to $L^{1}_{\sigma}(\mathbb{R}^d)$, if

\begin{equation}\label{growth at infinity}
\|u\|_{L^1_\sigma}: =\displaystyle \int_{\mathbb{R}^d} |u(x)|\dfrac{1}{1 +|x|^{d+\sigma}} { \bf d}x < +\infty.
\end{equation}
\end{Definition}

Throughout this manuscript, we consider different ranges of $\sigma$ to obtain the main results. For this reason, we fix the notation $\mathcal{I}_{\sigma}(u, x)$ to emphasize the dependence on $\sigma$ for the class of nonlocal operators as defined in \eqref{nonlocal operator}. 

In the proofs of Theorem \ref{Degenerate Theorem at least one solution} and Theorem \ref{Degenerate Theorem}, we make use of some scaled functions that satisfy a variant of equation \eqref{main_eq0}, namely:
 
\begin{equation*}
- \left(|Du + \xi|^p + a(x)|Du + \xi|^q \right){\mathcal I}_{\sigma}(u, x)= f(x) \quad \text{in} \quad B_1,
\end{equation*}
where $\xi \in \mathbb{R}^d$. Hence, we define the solutions and we prove some results for the equation above instead  of only for equation \eqref{main_eq0}. We first present the definition of viscosity solutions to \eqref{main_eq0}. Let us consider a collection of kernels $\left \{K_{ij}\right\}_{i, j} \subseteq \mathcal{K}_0$.

\begin{Definition}[Viscosity solution]
We say that $u \in  C(\overline{B}_1)\cap L^1_\sigma$ is a viscosity subsolution to \eqref{main_eq0}, if for any $\varphi \in C^2(\mathbb{R}^d)$ and for all $x_0 \in B_1$ such that $u - \varphi$ has a local maximum at $x_0$,  we have 
\[
- \left(|D\varphi + \xi|^p + a(x_0)|D\varphi + \xi|^q \right){\mathcal I}_{\delta}(u, \varphi, x_0)\leq f(x_0)\quad \text{in} \quad B_1,
\]
where the operator $\mathcal{I}_{\delta}$ is given by
\begin{equation*}
\mathcal{I}_{\delta}(u, \varphi, x):=\inf_i\sup_j\left(I_{K_{ij}}[B_{\delta}](\varphi, x) - I_{K_{ij}}[B^c_{\delta}](u, x)\right).
\end{equation*}
Similarly, we say that $u\in C(\mathbb{R}^d)$ is a viscosity supersolution to \eqref{main_eq0}, if for any $\varphi \in C^2(\mathbb{R}^d)$ and for all $x_0 \in B_1$ such that $u - \varphi$ has a local minimum at $x_0$, we have 
\[
- \left(|D\varphi + \xi|^p + a(x_0)|D\varphi + \xi|^q \right){\mathcal I}_{\delta}(u, \varphi, x_0)\geq f(x_0) \quad \text{in} \quad B_1.
\]
Finally, we say that $u\in C(\mathbb{R}^d)$ is a viscosity solution to \eqref{main_eq0} if it is both a viscosity subsolution and supersolution.
\end{Definition}

In the next, we define the so-called approximated solutions. Its main purpose is to allow us to deal with the singular case,  but it is also a fundamental notion in the proof of Theorem \ref{Degenerate Theorem at least one solution}. 

\begin{Definition}[Approximated viscosity solution]\label{new_def}
Let $\xi $ be a vector in $ \mathbb{R}^d$. We say that $u \in C(\overline{B}_1)\cap L^{1}_{\sigma}(\mathbb{R}^d)$ is an approximated viscosity solution to 
\begin{equation}\label{eq_scaled}
-\left(|Du + \xi|^p + a(x)|Du + \xi|^q\right){\mathcal I}_{\sigma}(u, x) = f \;\;\mbox{ in }\;\;B_1,    
\end{equation}
if there are sequences $(u_j)_{j\in \mathbb{N}} \in C(B_1)\cap L^{1}_{\sigma}(\mathbb{R}^d)$, $(\xi_j)_{j\in \mathbb{N}} \in \mathbb{R}^d$, $(c_j)_{j\in \mathbb{N}} \in \mathbb{R}^+$ and $C_1 > 0$ satisfying 
\[
u_j \mbox{ converges locally uniformly to } u  \mbox{ in  } B_1,
\]
\[
\|u_j\|_{L^1_\sigma} \leq C_1(1 + |x|^{1+\alpha})\;,\;\; 1+\alpha \in(0,\sigma),
\]
\[
\xi_j \to \xi \;, \; c_j \to 0,
\]
such that $u_j$ is a viscosity solution of
\[
-\left((|Du_j + \xi_j|+c_j)^p + a(x)(|Du_j + \xi_j|+c_j)^q\right){\mathcal I}_\sigma(u_j,x) =  f \;\;\mbox{ in }\;\;B_1.
\]
Moreover, if $0< p \leq q$, $c_j$ has also to satisfy $jc_j^p \to \infty$, as $j \to \infty$. If $\xi = 0$, then we take $\xi_j = 0$ for all $j\in \mathbb{N}$.
\end{Definition}


\begin{Remark}\label{rem_2}
If $\varphi \in C^2(\mathbb{R}^d)$ is a test function touching $u$ from above at a point $x_0$ then, because $u_j \to u$ locally uniformly in $B_1$, we have that there exists a sequence $(\varphi_j)_{j\in \mathbb{N}}$ such that $\varphi_j \to \varphi$ locally uniformly in $B_1$ and $\varphi_j$ touches $u_j$ from above at a point $x_j$, where $x_j \to x_0$. Since $u_j$ is a viscosity solution, this implies, 
\[
-\left((|D\varphi_j(x_j)|+c_j)^p + a(x_j)(|D\varphi_j(x_j)|+c_j)^q\right){\mathcal I}_{\delta}(u_j, \varphi_j, x_j) \leq f(x_j).
\]
\end{Remark}

\begin{Remark}[Scaling properties]
Throughout the paper, we require
\begin{equation}\label{prop_scal}
\|f\|_{L^\infty(B_1)} \leq \varepsilon,
\end{equation} 
for some $\varepsilon$ to be determined. The condition in \eqref{prop_scal} is not restrictive. In fact, consider the function
\[
v(x) = \dfrac{u(x)}{K},
\]
with $K > 0$. Notice that $v$ is also a viscosity solution to \eqref{main_eq0} in $B_1$, with 
\[
\tilde{a}(x) := K^{q-p}a(x) \;\;\;\mbox{ and }\;\;  \tilde{f}(x) = \frac{1}{K^{p+1}}f(x).
\]  
Hence, by choosing 
\[
K = \left(\varepsilon^{-1}\|f\|_{L^\infty(B_1)}\right)^{\frac{1}{p+1}},
\]
we can assume \eqref{prop_scal} without loss of generality.
\end{Remark}

\subsection{Main assumptions}

In what follows, we detail the assumptions of the paper. The first one  concerns the degree of the operator ${\mathcal I}_{\sigma}$.

\begin{Assumption}[Degree of the operator]\label{assump_degree}
 We suppose that $1 < \sigma <   2$. 
\end{Assumption}

To guarantee that the nonlocal operator in \eqref{nonlocal operator} is well-defined, we must impose further conditions on the growth at infinity of the function $u$. This is the content of our next assumption.

\begin{Assumption}[Growth condition at infinity]\label{assump_growth_of_u}
We assume that $u \in L^{1}_{\sigma}(\mathbb{R}^d)$. 

\end{Assumption}

We remark that bounded functions satisfy the growth condition \eqref{growth at infinity}. The next assumption concerns the source term.

\begin{Assumption}[Regularity of the source term]\label{assump_regularf}
 We suppose that $f \in L^\infty(B_1)\cap C(B_1)$. 
\end{Assumption}

Although we require $f \in C(B_1)$, all the estimates will only depend on the $L^\infty(B_1)$ norm of $f$. The next assumption concerns the regulairy of $a$.
Our last assumption ensures the convergence of the operator ${\cal I}_\sigma$, as $\sigma$ goes to 2.

\begin{Assumption}\label{assump_condition_on_kernels}
Let $\left \{ K_{ij}\right\}_{i, j}$ be a collection of kernels in $\mathcal{K}_0$. There exists a modulus of continuity  $\omega$ and  $\left\{ k_{ij}\right \}_{i, j} \in (\lambda, \Lambda)$ satisfying the estimates
\begin{equation}
\left|K_{ij}(x)|x|^{d + \sigma} + k_{ij} \right| \leq \omega(|x|) \;\; \text{for\: all} \;i, j\;\; \text{and} \;\;|x|\leq 1.
\end{equation}
\end{Assumption}

\section{Analysis of the degenerate case} 

In this section, we assume $0 < p \leq q$, \emph{i. e.}, we are considering the degenerate scenario. We present the results in two subsections: The first one is dedicated to the proof of Theorem \ref{Degenerate Theorem at least one solution}, while in the second one we give a proof of Theorem \ref{Degenerate Theorem}.

\subsection{$C^{1, \alpha}$-regularity via approximated viscosity solutions}

This subsection is devoted to the proof of Theorem \ref{Degenerate Theorem at least one solution}. We start with some properties of approximated viscosity solutions to \eqref{main_eq0}. The first one states that the set of approximated viscosity solutions is contained in the set of viscosity solutions.

\begin{Proposition}\label{equiv_def}
Let $u \in C(\overline{B}_1)$ be an approximated viscosity solution to \eqref{eq_scaled}. Suppose that A\ref{assump_degree}-A\ref{assump_regularf} are in force. Then $u$ is a viscosity solution to the same equation.  
\end{Proposition}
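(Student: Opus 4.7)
The plan is to establish the subsolution inequality; the supersolution case is entirely symmetric. Fix $\varphi \in C^2(\mathbb{R}^d)$ and $x_0 \in B_1$ at which $u - \varphi$ attains a local maximum. Replacing $\varphi$ by $\varphi(x) + |x-x_0|^4$, we may assume the maximum is strict on some $\overline{B_r(x_0)} \subset B_1$. By the locally uniform convergence $u_j \to u$ in Definition~\ref{new_def}, the maximisers $x_j$ of $u_j - \varphi$ on $\overline{B_r(x_0)}$ converge to $x_0$ (this is exactly the setting of Remark~\ref{rem_2} with $\varphi_j := \varphi$). The viscosity subsolution property of $u_j$ yields
\[
-\left((|D\varphi(x_j)+\xi_j|+c_j)^p + a(x_j)(|D\varphi(x_j)+\xi_j|+c_j)^q\right){\mathcal I}_{\delta}(u_j, \varphi, x_j) \leq f(x_j),
\]
and the heart of the proof is to pass to the limit $j \to \infty$ in this inequality.

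For the coefficient factor, continuity of $D\varphi$ and of $a$, combined with $\xi_j \to \xi$ and $c_j \to 0$, gives convergence to $|D\varphi(x_0)+\xi|^p + a(x_0)|D\varphi(x_0)+\xi|^q$, while $f(x_j) \to f(x_0)$ by continuity of $f$. For the nonlocal operator, writing
\[
H_{ij}(w,\psi,y) := I_{K_{ij}}[B_\delta](\psi,y) - I_{K_{ij}}[B_\delta^c](w,y),
\]
so that ${\mathcal I}_{\delta} = \inf_i \sup_j H_{ij}$, I would establish $H_{ij}(u_j, \varphi, x_j) \to H_{ij}(u, \varphi, x_0)$ \emph{uniformly in} $(i,j)$, so the inf-sup commutes with the limit. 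The local piece is treated by dominated convergence against the kernel-independent bound $C\|D^2\varphi\|_{L^\infty}|y|^2 \Lambda C_{d,\sigma}/|y|^{d+\sigma}$, integrable on $B_\delta$ since $\sigma < 2$. The tail piece uses the majorant $(|u_j(x_j+y)| + |u_j(x_j)|) \Lambda C_{d,\sigma}/|y|^{d+\sigma}$: the crucial input is the uniform growth estimate $\|u_j\|_{L^1_\sigma} \leq C_1(1+|x|^{1+\alpha})$ built into Definition~\ref{new_def}, which furnishes an $L^1$-majorant independent of $i$ and $j$, while locally uniform convergence of $u_j$ handles pointwise convergence of the integrand.

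The main obstacle I anticipate is exactly this uniformity in the kernel index $(i,j)$: without it, the inf-sup need not interchange with the limit, and it is precisely the kernel-free majorants above that make the argument work. A secondary delicate point is the degenerate case $D\varphi(x_0)+\xi = 0$, where the limiting coefficient vanishes and the inequality collapses to $0 \leq f(x_0)$; however, the regularization $c_j > 0$ in Definition~\ref{new_def} keeps the pre-limit coefficient strictly positive, so the $u_j$-level inequality is a bona fide uniformly elliptic statement from which the degenerate limit can be extracted without difficulty. Combining these ingredients produces
\[
-\left(|D\varphi(x_0)+\xi|^p + a(x_0)|D\varphi(x_0)+\xi|^q\right){\mathcal I}_{\delta}(u, \varphi, x_0) \leq f(x_0),
\]
which is the desired viscosity subsolution inequality at $x_0$, completing the argument.
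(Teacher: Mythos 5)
Your argument is correct and is essentially the paper's proof: transfer the test function to the approximating solutions $u_j$ (Remark~\ref{rem_2}), write the $j$-level viscosity inequality, and pass to the limit in the coefficient, in $f$, and in the nonlocal term; the only real difference is that you justify ${\mathcal I}_{\delta}(u_j,\varphi,x_j)\to{\mathcal I}_{\delta}(u,\varphi,x_0)$ by hand via kernel-uniform dominated convergence (using the growth bound in Definition~\ref{new_def} for the tail), whereas the paper simply invokes \cite[Lemma 5]{Caffarelli-Silvestre2011}. One small point of care: with the fixed perturbation $\varphi+|x-x_0|^4$ you only obtain the subsolution inequality for the perturbed test function, and since its nonlocal term dominates that of $\varphi$ the inequality does not transfer back directly; the standard fix is to perturb by $\epsilon|x-x_0|^4$ and send $\epsilon\to0$ at the end, noting that the extra contribution to each $I_{K_{ij}}[B_\delta]$ is at most $\epsilon\,\Lambda\, C\,\delta^{4-\sigma}$ uniformly in the kernels, so the limit inequality for $\varphi$ itself follows.
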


\begin{proof}
Let $\varphi \in C^2(\mathbb{R}^d)$ and $x_0 \in B_1$ such that $u-\varphi$ has a local maximum at $x_0$. From Definition \ref{new_def} and Remark \ref{rem_2} we obtain 
\[
-\left[(|D\varphi_j(x_j) + \xi_j|+c_j)^p + a(x_j)(|D\varphi(x_j) + \xi_j|+c_j)^q\right]{\mathcal I}_{\delta}(u_j, \varphi_j, x_j) \leq f(x_j),
\]
where $\varphi_j \to \varphi$ and $u_j \to u$ locally uniformly in $B_1$, $x_j \to x_0$, $\xi_j \to \xi$ and $c_j \to 0$. By passing the limit as $j \to \infty$, we get that  
\[
-\left(|D\varphi(x_0) + \xi|^p + a(x_0)|D\varphi(x_0) + \xi|^q\right){\mathcal I}_{\delta}(u, \varphi, x_0) \leq f(x_0),
\]
which implies that $u$ is a viscosity subsolution to \eqref{eq_scaled}. We recall that the convergence ${\mathcal I}_{\delta}(u_j, \varphi_j, x_j) \to {\mathcal I}_{\delta}(u, \varphi, x_0)$ follows from \cite[Lemma 5]{Caffarelli-Silvestre2011}. Similarly, we can prove that $u$ is also a viscosity supersolution. This finishes the proof.
\end{proof}

\begin{Remark}\label{rem_3}
Due to the absence of a uniqueness result, the reverse statement may not be true. 
\end{Remark}

Next, we present a stability type result for approximated viscosity solutions of \eqref{eq_scaled}. 

\begin{Proposition}[Weak stability of approximated viscosity solutions]\label{prop_stability}
Let $(u_j)_{j \in \mathbb{N}} \in C(\overline{B}_1)$, $(\xi_j)_{j \in \mathbb{N}} \in \mathbb{R}^d$ and $f_j \in C(B_1)\cap L^\infty(B_1)$ be such that 
\[
-\left(|Du_j + \xi_j|^p + a_j(x)|Du_j+\xi_j|^q\right){\mathcal I}_\sigma(u_j,x) = f_j \;\;\mbox{ in }\;\;B_1
\]
in the approximated viscosity solution sense. Assume that A\ref{assump_degree}-A\ref{assump_regularf} hold true. Suppose further that there exists a function $u \in C(\overline{B}_1) \cap L^{1}_{\sigma}(\mathbb{R}^d)$ such that $u_j \to u$ locally uniformly in $B_1$ and $\|f_j\|_{L^\infty(B_1)} \leq 1/j$. Then, $u$ is a viscosity solution to
\begin{equation}\label{eq_stab}
{\mathcal I}_\sigma(u,x) = 0 \;\;\mbox{ in }\;\;B_1.
\end{equation}

\end{Proposition}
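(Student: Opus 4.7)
I will show that $u$ is a viscosity subsolution of $\mathcal{I}_\sigma u = 0$ in $B_1$; the supersolution case is symmetric. By Proposition \ref{equiv_def}, every $u_j$ is a (standard) viscosity solution of its own equation, so the ordinary subsolution inequality is available. Fix $\varphi \in C^2(\mathbb{R}^d)$ touching $u$ from above at $x_0 \in B_1$; after a standard quartic perturbation $\eta |x-x_0|^4$, removed at the end, assume the touching is strict. By the local uniform convergence $u_j \to u$, there exist $x_j \to x_0$ such that $\varphi$ (up to a vanishing additive constant) touches $u_j$ from above at $x_j$, and the subsolution inequality reads
\[
-\bigl(|D\varphi(x_j)+\xi_j|^p + a_j(x_j)|D\varphi(x_j)+\xi_j|^q\bigr)\, \mathcal{I}_\delta(u_j,\varphi,x_j) \;\le\; f_j(x_j) \;\le\; \tfrac{1}{j}.
\]
By \cite[Lemma 5]{Caffarelli-Silvestre2011}, $\mathcal{I}_\delta(u_j,\varphi,x_j) \to \mathcal{I}_\delta(u,\varphi,x_0)$.

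In the \emph{non-degenerate case}, when $\liminf_j |D\varphi(x_j)+\xi_j| > 0$, the coefficient is bounded below by a positive constant along a subsequence. Dividing by it and sending $j \to \infty$ gives $\mathcal{I}_\delta(u,\varphi,x_0) \ge 0$ directly.

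In the \emph{degenerate case}, when $|D\varphi(x_j)+\xi_j| \to 0$, one automatically has $\xi_j \to -D\varphi(x_0) =: \xi$. I perturb the test function by setting $\psi_\epsilon(x) := \varphi(x) + \epsilon\, v_\epsilon \cdot (x-x_0)$, where $x_\epsilon \to x_0$ is a local minimizer of $\psi_\epsilon - u$ and $v_\epsilon$ is the unit vector in the direction of $D\varphi(x_\epsilon)-D\varphi(x_0)$ (any fixed unit vector if this difference vanishes). The key computation is that $|D\psi_\epsilon(x_\epsilon)+\xi| = |D\varphi(x_\epsilon)-D\varphi(x_0)| + \epsilon \ge \epsilon$, so that for $j$ sufficiently large the associated touching point $y_{j,\epsilon}$ of $\psi_\epsilon$ with $u_j$ (obtained again from uniform convergence, after a further quartic perturbation to make the minimum strict) satisfies $|D\psi_\epsilon(y_{j,\epsilon})+\xi_j| \ge \epsilon/2$. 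Applying the non-degenerate argument to the pair $(\psi_\epsilon,u_j)$ and sending $j \to \infty$ yields $\mathcal{I}_\delta(u,\psi_\epsilon,x_\epsilon) \ge 0$. Finally, sending $\epsilon \to 0$ and invoking the stability of the nonlocal operator under local uniform convergence gives $\mathcal{I}_\delta(u,\varphi,x_0) \ge 0$.

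The main obstacle is the degenerate case: the coefficient of the equation vanishes in the limit, so the naive estimate yields $0 \le 0$ and no information. The linear perturbation is the standard remedy, but the direction $v_\epsilon$ must be chosen to align with $D\varphi(x_\epsilon)-D\varphi(x_0)$ so that the magnitudes add rather than cancel; this gives a lower bound of order $\epsilon$ on the perturbed effective gradient that is independent of the unknown rate at which $x_\epsilon$ approaches $x_0$ (which could be slow, since $u$ is not assumed to have any regularity beyond continuity).
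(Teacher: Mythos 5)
There is a genuine gap in your degenerate case. The choice of the perturbation direction is circular: you define $x_\epsilon$ as a minimizer of $\psi_\epsilon-u$, but $\psi_\epsilon$ is built from $v_\epsilon$, which in turn is defined through $x_\epsilon$ (the unit vector along $D\varphi(x_\epsilon)-D\varphi(x_0)$). As written there is no object to start from, and the obvious repairs do not come for free: if you fix $v$ first and then take a maximum point $x_{\epsilon,v}$ of $u-\varphi-\epsilon v\cdot(\cdot-x_0)$, nothing prevents $D\varphi(x_{\epsilon,v})-D\varphi(x_0)$ from (nearly) cancelling $\epsilon v$ — $u$ is only continuous, so you have no control on $|D\varphi(x_{\epsilon,v})-D\varphi(x_0)|$ relative to $\epsilon$ — and a fixed-point argument over $v$ is unavailable because the map $v\mapsto x_{\epsilon,v}$ is neither single-valued nor continuous. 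So the key lower bound $|D\psi_\epsilon(x_\epsilon)+\xi|\geq\epsilon$, on which the whole degenerate case rests, is unjustified. In the literature on $\left(|Du|^p+a(x)|Du|^q\right)$-type degeneracies this step is handled not by a linear perturbation but by a perturbation adapted to the Hessian of the test function (compare the $k|\Pi_{\Gamma}(x)|$ construction in the proof of Lemma \ref{approx_lemma}), and even then the argument is delicate; it is not a detail one can wave through.

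More importantly, your first move — invoking Proposition \ref{equiv_def} to treat each $u_j$ as a plain viscosity solution — discards exactly the structure that the hypothesis ``in the approximated viscosity solution sense'' provides and that the paper's proof uses. By Definition \ref{new_def}, each $u_j$ comes with regularized solutions $u_j^m$ of equations whose gradient coefficient is $(|Du_j^m+\xi_j^m|+c_m)^p+a_j(x)(\cdot)^q\geq c_m^p>0$, with $mc_m^p\to\infty$. The paper takes the diagonal sequence $u_j^j\to u$ and observes that $u_j^j$ solves a uniformly elliptic nonlocal equation whose right-hand side is bounded by $\|f_j\|_{L^\infty(B_1)}c_j^{-p}\leq 1/(jc_j^p)\to 0$; standard stability (the same convergence ${\mathcal I}_\delta(u_j,\varphi_j,x_j)\to{\mathcal I}_\delta(u,\varphi,x_0)$ from \cite[Lemma 5]{Caffarelli-Silvestre2011} that you already use) then gives ${\mathcal I}_\sigma(u,x)=0$ with no case analysis at all. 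In short: the non-degenerate half of your argument is fine, but the degenerate half has a real hole, and the intended proof avoids that hole entirely by exploiting the built-in $c_j$-regularization rather than reducing to ordinary viscosity solutions.
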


\begin{proof}
By Definition \ref{new_def}, there exist sequences $(u_j^m)_{m\in \mathbb{N}} \in C(B_1)\cap L^{1}_{\sigma}(\mathbb{R}^d)$, $(\xi_j^m)_{m\in \mathbb{N}} \in \mathbb{R}^d$ and $(c_m)_{j\in \mathbb{N}} \in \mathbb{R}^+$ satisfying 
\[
u_j^m \mbox{ converges locally uniformly to } u_j  \mbox{ in  } B_1,
\]
\[
c_m \to 0,\;\; mc_m^p \to \infty, 
\]
as $m \to \infty$, such that $u_j^m$ is a viscosity solution of
\[
-\left[(|Du_j^m + \xi_j^m|+c_m)^p + a_j(x)(|Du_j^m + \xi_j^m| + c_m)^q\right]{\mathcal I}_\sigma(u_j^m,x) = f_j \;\;\mbox{ in }\;\;B_1,
\]
for each fixed $j$. Now, we consider the sequence $(u_j^j)_{j\in \mathbb{N}}$. We have that $u_j^j \to u$ locally uniformly in $B_1$, and $u_j^j$ is a viscosity solution to
\[
-\left[(|Du_j^j + \xi_j^j|+c_j)^p + a_j(x)(|Du_j^j + \xi_j^j| + c_j)^q\right]{\mathcal I}_\sigma(u_j^j,x) = f_j \;\;\mbox{ in }\;\;B_1.
\]
Therefore, $u^j_j$ is also a viscosity solution to
\begin{align*}
-{\mathcal I}_\sigma(u_j^j,x) & = \dfrac{f_j}{\left[(|Du_j^j + \xi_j^j|+c_j)^p + a_j(x)(|Du_j^j + \xi_j^j| + c_j)^q\right]} \\
 & \leq \dfrac{\|f_j\|_{L^\infty(B_1)}}{\left[(|Du_j^j + \xi_j^j|+c_j)^p\right]} \\
 & \leq \dfrac{1}{jc_j^p}.
\end{align*}
Since by definition $jc^p_j \to \infty$, as $j \to \infty$, we infer that $u$ is a viscosity subsolution to
\[
-{\mathcal I}_\sigma(u,x) \leq 0 \;\;\mbox{ in }\;\;B_1.
\]
Similarly, we show that $u$ is also a viscosity supersolution. This finishes the proof.
\end{proof}

Now, for each $j \in \mathbb{N}$, we consider the nonlocal uniformly elliptic equation
\begin{equation}\label{eq_reg2}
-\left[(|Du_j|+c_j)^p + a(x)(|Du_j|+c_j)^q\right]{\cal I}_{\sigma}(u_j,x) = f(x) \;\;\mbox{ in }\;\;B_1.
\end{equation}
We are going to show that a sequence $(u_j)_{j\in\mathbb{N}} \in C(B_1)\cap L^1_\sigma(\mathbb{R}^d)$ of viscosity solutions to \eqref{eq_reg2} converges to a function $u \in C(B_1)\cap L^1_\sigma(\mathbb{R}^d)$, where $u$ is an approximated viscosity solution to \eqref{main_eq0} with a suitable boundary data. In order to do that, we stablish a compactness result for the solutions of \eqref{eq_reg2}.

\begin{Lemma}\label{Lipschitz_Regularity2}
Let  $u \in C(\overline{B}_1)$ be a viscosity solution to \eqref{eq_reg2}. Suppose that A\ref{assump_degree}-A\ref{assump_regularf} are in force. Then $u_j$ is locally Lipschitz continuous,\emph{ i. e.,}
\begin{equation*}
|u_j(x) - u_j(y)| \leq C|x - y| \quad  \text{for all}\: \: x, y  \in B_{1/2},
\end{equation*}
where the constant $C>0$ does not depend on $j$. 
\end{Lemma}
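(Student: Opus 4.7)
The plan is to apply the Ishii--Lions doubling-variables technique adapted to the nonlocal setting (cf. \cite{Barles-Chasseigne-Imbert2011, Prazeres-Topp2021}). The decisive issue is that every constant must be chosen independently of $j$, and in particular independent of $c_j\to 0$; this is achieved by working at a location where the gradient of the test function dominates $c_j$, so that the degenerate prefactor is bounded below independently of $j$.

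Fix $x_0 \in B_{1/2}$ and consider
\[
\Phi(x,y) \;=\; u_j(x) - u_j(y) - L\,\omega(|x-y|) - \tau\bigl(|x-x_0|^2 + |y-x_0|^2\bigr),
\]
where $\omega(t) = t - \kappa t^{1+\alpha_0}$ is smooth, increasing, and concave on $[0,1]$, with $\kappa>0$ small and $\alpha_0 \in (0,\sigma-1)$, while $L,\tau$ are large constants to be fixed. If one can show $\sup_{\overline{B}_1\times \overline{B}_1}\Phi \leq 0$, then taking $y=x_0$ yields $|u_j(x)-u_j(x_0)| \leq 2L|x-x_0|$ for $x$ near $x_0$, which is the desired Lipschitz bound. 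Assume for contradiction $\sup \Phi > 0$, realized at $(\bar x, \bar y)$. The uniform bound on $\|u_j\|_{L^\infty(B_1)}$ (inherited from $u$ via Definition \ref{new_def}) together with the quadratic penalty forces $\bar x, \bar y \in B_{3/4}$ for $\tau$ large enough, and necessarily $\bar x \neq \bar y$.

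The functions
\[
\varphi_1(x) = L\omega(|x-\bar y|) + \tau|x-x_0|^2 + C_1, \qquad \varphi_2(y) = -L\omega(|\bar x - y|) - \tau|y-x_0|^2 + C_2,
\]
are $C^2$ near $\bar x,\bar y$ (as $\bar x\neq \bar y$), touch $u_j$ from above at $\bar x$ and from below at $\bar y$, and have gradients
\[
D\varphi_1(\bar x) = L\omega'(r)\hat n + 2\tau(\bar x - x_0),\qquad D\varphi_2(\bar y) = L\omega'(r)\hat n - 2\tau(\bar y - x_0),
\]
with $r = |\bar x - \bar y|$ and $\hat n = (\bar x-\bar y)/r$. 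Choosing $L \gg \tau$ and $\kappa$ small forces $|D\varphi_i| \geq L/3$; consequently $(|D\varphi_i|+c_j)^p + a(\cdot)(|D\varphi_i|+c_j)^q \geq (L/3)^p$ \emph{uniformly in $j$}. This is the crucial gain over naive uniform-ellipticity estimates, whose constant would scale like $c_j^{-p}$. Next, writing the sub/supersolution inequalities at $\bar x,\bar y$, subtracting, and invoking the nonlocal Ishii--Lions estimate in the spirit of \cite[Lemma 4.1]{Barles-Chasseigne-Imbert2011}, the strongly negative radial Hessian $\omega''(r) = -\kappa\alpha_0(1+\alpha_0)r^{\alpha_0-1}$ paired with the kernel lower bound in $\mathcal{K}_0$ delivers a contribution of order $-c\,\kappa L\, r^{\alpha_0+1-\sigma}$; the far-field difference of $u_j$ is in turn controlled using the inequality $\Phi(\bar x+z,\bar y+z)\leq \Phi(\bar x,\bar y)$ and the uniform $L^1_\sigma$ bound on $u_j$. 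Dividing through by the lower bound $(L/3)^p$ on the degenerate coefficient yields
\[
c\,\kappa\, L\, r^{\alpha_0+1-\sigma} \;\leq\; C\bigl(\tau + \|u_j\|_{L^1_\sigma}\bigr) + \frac{2\|f\|_{L^\infty(B_1)}}{(L/3)^p},
\]
and since the maximality of $\Phi$ bounds $r$ by a quantity depending only on $\|u_j\|_{L^\infty}/L$, the left-hand side blows up for $L$ large, producing the required contradiction.

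The main obstacle is careful bookkeeping to guarantee that the thresholds for $L, \tau, \kappa, \alpha_0$ depend only on $\|u\|_{L^\infty(B_1)}, \|u\|_{L^1_\sigma(\mathbb{R}^d)}, \|f\|_{L^\infty(B_1)}, \lambda, \Lambda, d, \sigma, p, q, \|a\|_{L^\infty(B_1)}$, and never on $c_j$. This is ensured by the Ishii--Lions mechanism: at the doubling maximum the effective gradient is proportional to $L$, independent of $j$, so $c_j$ drops out of the coercive lower bound on the degenerate coefficient, and the remaining estimates are purely nonlocal-elliptic in nature, with constants uniform in $j$.
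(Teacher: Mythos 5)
Your proposal follows essentially the same route as the paper's proof: an Ishii--Lions doubling argument with the concave modulus $t-\kappa t^{1+\alpha}$, the decisive observation that at the doubling maximum the test-function gradient is of order $L$ (hence $(|\xi|+c_j)^p$ is bounded below uniformly in $j$, so $c_j$ drops out), and the nonlocal estimates of Barles--Chasseigne--(Ciomaga--)Imbert giving a term of order $-cL|\bar x-\bar y|^{-\theta}$ that produces a contradiction for $L$ large depending only on $\osc_{B_1}u_j$, $\|u_j\|_{L^1_\sigma}$ and $\|f\|_{L^\infty(B_1)}$. The only difference is cosmetic: you localize with a quadratic penalty centered at $x_0$, whereas the paper adds a smooth cutoff $\psi(y)$ of size $\osc_{B_1}u_j+1$ to the test function.
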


\begin{proof} 
We consider $\tilde{\psi}: \mathbb{R}^d \rightarrow \mathbb{R}$ a nonnegative and smooth function such that 
\[
\tilde{\psi}=0 \quad \text{in} \quad  B_{1/2}, \quad \tilde{\psi} = 1 \quad \text{in} \quad B^c_{3/4}, \quad \]
and we define
\[
\psi = (\osc_{B_1} u_j + 1)\tilde{\psi} .
\]
Let $\alpha \in (0, 1)$ to be determined later, and $\varphi: [0, +\infty) \rightarrow \mathbb{R}$ be defined as 
\[
\varphi(t)=
\left\{
\begin{array}{cc}
t - t^{1+\alpha}, & t \in [0, t_0]\\
\varphi(t_0), & t> t_0,\\
\end{array}
\right.
\]
where $t_0>0$ is chosen sufficiently small in order to have $\varphi >0$ in $(0, +\infty)$.  It follows from the definition of $\varphi$ that it is smooth in $(0, t_0)$ and differentiable in $[0, t_0)$.

Consider $\phi, \Phi:\mathbb{R}^d \times \mathbb{R}^d \rightarrow \mathbb{R}$ defined by
$\phi, \Phi:\mathbb{R}^d \times \mathbb{R}^d \rightarrow \mathbb{R}$ as
\[
\phi(x, y):= L\varphi(|x- y|) + \psi(y)
\]
and 
\[
\Phi(x, y) := u_j(x) - u_j(y) - \phi(x, y).
\]
From the continuity property of $\Phi$, we have $\Phi$ attains its maximum in $\overline{B}_1 \times \overline{B}_1$ at $(\bar{x}, \bar{y})$. Hence, it is enough to prove that $\Phi(\bar{x}, \bar{y}) \leq 0$. Suppose by contradction that 
\[
\Phi(\bar{x}, \bar{y}) > 0.
\]
It follows that
\[
u_j(\bar{x}) - u_j(\bar{y}) - L\varphi(|\bar{x}- \bar{y}|) - \psi(\bar{y})>0.
\]
Therefore 
\[
\osc_{B_1} u - (\osc_{B_1} u +1 )\tilde{\psi}(\bar{y}) - L \varphi(|\bar{x} - \bar{y}|) >0.
\]
Observe that if $\bar{y} \in B^c_{3/4}$, then $\tilde{\psi} \equiv 1$ which implies 
\[
-1 - L\varphi(|\bar{x} - \bar{y}|) >0,
\]
which is a contradiction, hence $\bar{y} \in B_{3/4}$. Since $L\varphi(|\bar{x} - \bar{y}|) \leq \osc_{B_1} u$, by taking $L$ sufficiently large, we may assume that $\bar{x} \in B_{7/8}$. Finally, it is straightforward to notice that $\bar{x}\not= \bar{y}$, otherwise we would have $\Phi (\bar{x}, \bar{y})<0$.

Now, we compute $D_x\phi$ and $D_y\phi$ at $(\bar{x}, \bar{y})$
\[
D_x\phi(\bar{x}, \bar{y}) =  L_2{\varphi}^{\prime}(|\bar{x} - \bar{y}|)|\bar{x} - \bar{y}|^{-1}(\bar{x} - \bar{y})
\]
and 
\[
- D_y\phi(\bar{x}, \bar{y}) = L_2{\varphi}^{\prime}(|\bar{x} - \bar{y}|)|\bar{x} - \bar{y}|^{-1}(\bar{x} - \bar{y}) - D\psi(\bar{y}).
\]
Let us denote
\[
\xi_{\bar{x}} := D_x\phi(\bar{x}, \bar{y}) \quad \text{and} \quad
\xi_{\bar{y}} := D_y\phi(\bar{x}, \bar{y}).
\]
We have that $u_j - \Phi_{\bar{y}}$ attains its maximum at $\bar{x}$, where ${\Phi}_{\bar{y}}(x) := u_j(\bar{y}) + \phi(x, \bar{y})$. Hence, since $u_j$ is a viscosity solution to \eqref{eq_reg2}, we obtain the following viscosity inequality
\[
- \left[(|{\xi}_{\bar{x}}| + c_j)^p + a(\bar{x})(|{\xi}_{\bar{x}}|+c_j)^q \right]{\mathcal I}_{\delta}(u_j, {\Phi}_{\bar{y}}, \bar{x})\leq f(\bar{x}).
\]
Now, we take $L$ sufficiently large, only depending on $\mbox{osc}_{B_1}u$, so that
\[
(|{\xi}_{\bar{x}}| + c_j), (|{\xi}_{\bar{y}}| + c_j) \geq 1/2, 
\]
which implies
\[
\begin{array}{ccl}
- {\mathcal I}_{\delta}(u, {\Phi}_{\bar{y}}, \bar{x})
&\leq & \dfrac{ \|f\|_{L^{\infty}(B_1)}}{\left[(|{\xi}_{\bar{x}}| + c_j)^p + a(\bar{x})(|{\xi}_{\bar{x}}|+c_j)^q \right]} \vspace{0.3cm} \\
& \leq & \dfrac{ \|f\|_{L^{\infty}(B_1)}}{(|{\xi}_{\bar{x}}| + c_j)^p} \vspace{0.3cm} \\
&\leq& 2\|f\|_{L^{\infty}(B_1)}. \\
\end{array}
\]
Similarly we can obtain $- \mathcal{I}_{\delta}(u, - \Phi_{\bar{x}}, \bar{y}) \geq -2\|f\|_{L^{\infty}(B_1)}$, and consequently
\[
{\mathcal I}_{\delta}({u, \Phi}_{\bar{y}}, \bar{x}) - \mathcal{I}_{\delta}(u, - \Phi_{\bar{x}}, \bar{y})\geq -  4\|f\|_{L^{\infty}(B_1)}. 
\]
At this point, we concentrate in estimating the left hand side of the inequality above, that consists of uniformly elliptic nonlocal operators. The next step follows the arguments in \cite{Barles-Chasseigne-Ciomaga-Imbert-12, Barles-Chasseigne-Imbert-11} and we present here for the sack of completeness.
	
	
For $|z| \leq 1/10$, we have $\bar x + z, \bar y + z \in \overline B_1$. Then, there exists a kernel $K$ in the family ${\mathcal K}_0$ such that
	\begin{equation}\label{Roma}
	-C(\|f\|_\infty + \osc_{B_1}u_j + \|u_j\|_{L^1_\sigma}) - 1 \leq I_1 + I_2, 
	\end{equation}
	where 
	\begin{align*}
	I_1 & := I_K[B_{\delta}](\phi(\cdot, \bar y), \bar x) - I_K[B_{\delta}](-\phi(\bar x, \cdot), \bar y), \\
	I_2 & := I_K[B_{1/10} \setminus B_\delta](u_j, \xi_{\bar x}, \bar x) - I_K[B_{1/10} \setminus B_\delta](u_j, \xi_{\bar y}, \bar y),
	\end{align*}
	and the constant $C > 0$ depends on the ellipticity constants.

 We denote $e = \bar x - \bar y$, $\hat e = e/|e|$ and 
	\begin{align*}\label{C}
	\mathcal{C} = \{ z \in B_\rho : |\langle \hat e, z \rangle| \geq (1 - \eta) |z|\},
	\end{align*}
	for constants $\eta \in (0,1)$ and $\rho \in (0,1/10)$ to be fixed.  Since $(\bar x, \bar y)$ is a maximum point of $\Phi$, for all $z \in \overline B_{1/10}$ we have the following inequalities
	
\begin{eqnarray*}\label{ineq}
 u_j(\bar x + z) - u_j(\bar x) &\le& u_j(\bar y + z)  - u_j(\bar y) 
+ \phi(\bar x + z, \bar y) - \phi(\bar x, \bar y), \nonumber\\
 u_j(\bar y + z) - u_j(\bar y) &\geq &  -\phi(\bar x, \bar y+z)+\phi(\bar x,\bar y), \\
 u_j(\bar x + z) - u_j(\bar x) &\leq & \phi(\bar x +z,\bar y)- \phi(\bar x,\bar y), \nonumber
\end{eqnarray*}
and hence, it is possible to get that
	\begin{align*}
	I_K[\mathcal{C} \setminus B_\delta](u_j, \xi_{\bar x}, \bar x) & \leq I_K[\mathcal{C}\setminus B_\delta](\varphi, e), \\
	I_K[\mathcal{C} \setminus B_\delta](u_j, \xi_{\bar y}, \bar y) & \geq -I_K[\mathcal{C}\setminus B_\delta](\varphi, e) - C\osc_{B_1}u_j,
	\end{align*}
	where the constant $C > 0$ is uniformly bounded from above and from below as $\delta \to 0$. Moreover, for every set $\mathcal O \subset \mathbb{R}^d$ such that $\bar x + z, \bar y + z \in \overline B_1$, we have
	\begin{equation*}
	I_K[\mathcal O](u_j,  \varphi, \bar{x}) - I_K[\mathcal O](u_j, \varphi, \bar{y}) \leq I_K[\mathcal O](\psi, \bar{y}) \leq C \Lambda \osc_{B_1}u_j,
	\end{equation*}
where the latter inequality follows from the smoothness of $\psi$.
	
	Then, we plug these inequalities into \eqref{Roma} to conclude that
	\begin{equation*}
	-C(\|f\|_\infty + \osc_{B_1}u_j + \|u_j\|_{L^1_\sigma} + 1) \leq 2 I_K[\mathcal{C} \setminus B_\delta](\varphi, e) + I_1,
	\end{equation*}
and we notice that the term $I_1 \to 0$ as $\delta \to 0$, meanwhile, by Dominated Convergence Theorem, we have $I_K[
\mathcal{C} \setminus B_\delta](\varphi, e) \to I_K[\mathcal{C}](\varphi, e)$ as $\delta \to 0$.

	We make $\delta \to 0$, and take $\eta = c_1 |e|^{2\alpha}, \rho = c_1 |e|^\alpha$ for some constant $c_1 > 0$ universal. Using the
estimates  in \cite[Corollary 9]{Barles-Chasseigne-Ciomaga-Imbert-12}, we have that
	\begin{equation}\label{crucial}
	I_K[\mathcal{C}](\varphi, e) \leq - c L |e|^{1 - \sigma + \alpha (N + 2 - \sigma)},
	\end{equation}
	for some $c=c(\lambda, \Lambda, d)> 0$ such that
 $\epsilon_0 \leq c \leq \epsilon_0^{-1}$ for all $\sigma \in (1,2)$, where $\epsilon_0 \in (0,1)$ depends only on $d$.

Now, we fix $\alpha > 0$ small enough to get 
	$$
	-\theta := 1 - \sigma + \alpha(N + 2 - \sigma) \leq (1 - \sigma)/2 < 0.
	$$
Thus, replacing the estimates above into~\eqref{Roma} we arrive at
	\begin{equation*}
	-C(\|f\|_\infty + \osc_{B_1} (u_j) + \|u_j\|_{L^1_\sigma} + 1) \leq -c L |\bar{x}-\bar{y}|^{-\theta}, 
	\end{equation*}
and since 
\[
|\bar{x}-\bar{y}|\leq C_2 \mbox{osc}_{B_1}u_j L^{-1}
\]
we obtain
\[
-C_1\left( \|f\|_{L^\infty(B_1)} + \mbox{osc}_{B_1}u_j + \|u_j\|_{L^1_\sigma} +1 \right) \leq -cC_2L^{\theta}.
\]
Hence by taking 
\[
L := C_3\left( \|f\|_{L^\infty(B_1)} + \mbox{osc}_{B_1}u_j +1 \right), 
\]
where $C_3$ is a large enough constant, depending only on $\lambda$, $\Lambda$, $d$ we get a contradiction. This finishes the proof.
\end{proof}

\begin{Proposition}[Existence of approximated viscosity solutions]\label{prop_exis2}
Suppose that A\ref{assump_degree}-A\ref{assump_regularf} hold true. Then, there exists at least one approximated viscosity solution $u\in C(B_1)$ to \eqref{main_eq0}.
\end{Proposition}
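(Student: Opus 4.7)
The plan is to produce the required approximating sequence directly from Definition \ref{new_def}. Fix $c_j>0$ with $c_j\to 0$ and $jc_j^p\to\infty$ (e.g.\ $c_j=j^{-1/(2p)}$), fix $\xi_j=\xi=0$, and prescribe a continuous boundary datum $g$ on $\mathbb{R}^d\setminus B_1$ belonging to $L^1_\sigma$. For each $j$ I would solve the regularized Dirichlet problem
\[
-\bigl[(|Du_j|+c_j)^p + a(x)(|Du_j|+c_j)^q\bigr]{\cal I}_{\sigma}(u_j,x)=f(x) \text{ in } B_1, \qquad u_j=g \text{ in } \mathbb{R}^d\setminus B_1.
\]
Since the coefficient multiplying ${\cal I}_\sigma$ is bounded below by $c_j^p>0$, this problem is, for each fixed $j$, equivalent to the uniformly elliptic nonlocal equation $-{\cal I}_\sigma(u_j,x)=G_j(x,Du_j)$, where $G_j$ is continuous in its arguments and bounded by $\|f\|_{L^\infty}/c_j^p$.

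Existence of a viscosity solution $u_j\in C(\overline B_1)\cap L^1_\sigma$ to the regularized problem would then be obtained by Perron's method for uniformly elliptic integro-differential operators as developed in \cite{Caffarelli-Silvestre2009}, combined with a comparison principle for nonlocal equations with gradient-dependent coefficients. Standard barriers built from smooth radial sub/supersolutions give the correct boundary behavior since $\|G_j\|_{L^\infty}$ is finite at fixed $j$. (If one prefers to avoid a direct comparison argument for the gradient-dependent coefficient, one can instead invoke a Schauder fixed-point scheme: freeze the gradient inside the coefficient, solve the resulting linear-in-coefficient nonlocal equation, and close the loop using a Lipschitz estimate at fixed $c_j$ to supply compactness.)

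With $(u_j)$ in hand, Lemma \ref{Lipschitz_Regularity2} applies uniformly in $j$ (the constant there does not depend on $j$), giving equicontinuity on $B_{1/2}$. Using the fixed boundary datum $g$, an $L^\infty$ bound for $u_j$ follows from the nonlocal ABP estimate of \cite{Caffarelli-Silvestre2009} applied to the uniformly elliptic form $-{\cal I}_\sigma(u_j,x)=G_j(x,Du_j)$, with $\|G_j\|_{L^\infty}\le\|f\|_{L^\infty}/c_j^p$ — however, one needs a bound uniform in $j$, which is obtained instead by comparison with fixed sub/supersolutions of the homogeneous problem $-{\cal I}_\sigma w=\pm\|f\|_{L^\infty}$ built from the cut-off barriers used in Lemma \ref{Lipschitz_Regularity2}. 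By Arzelà–Ascoli and a diagonal argument, a subsequence converges locally uniformly in $B_1$ to some $u\in C(B_1)$, and because $u_j\equiv g$ outside $B_1$ we retain $\|u_j\|_{L^1_\sigma}\le C_1(1+|x|^{1+\alpha})$ uniformly. All items in Definition \ref{new_def} are thus satisfied, so the limit $u$ is an approximated viscosity solution to \eqref{main_eq0}.

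The main obstacle I anticipate is the \emph{existence step for the regularized problem}: the gradient-dependent coefficient prevents a direct application of off-the-shelf Perron/comparison theory for translation-invariant nonlocal equations. Both routes indicated above (direct comparison via doubling of variables, or Schauder fixed point with frozen gradient) require some care to carry out rigorously, and the choice between them determines how much of the argument needs to be made explicit. Once $u_j$ exists, the passage to the limit is routine thanks to the $j$-uniform estimate of Lemma \ref{Lipschitz_Regularity2} and the growth control inherited from $g$.
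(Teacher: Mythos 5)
Your proposal follows essentially the same route as the paper: regularize with $c_j=j^{-1/(2p)}$, solve the Dirichlet problem with exterior datum $g\in L^1_\sigma(\mathbb{R}^d)$ satisfying the growth bound, use the $j$-uniform Lipschitz estimate of Lemma \ref{Lipschitz_Regularity2} together with Arzel\`a--Ascoli, and identify the limit as an approximated viscosity solution directly from Definition \ref{new_def}. The only divergence is the existence step you flag as the main obstacle: the paper settles it with a single citation to \cite{Barles-Chasseigne-Imbert-2008}, whose viscosity theory for uniformly elliptic nonlocal Dirichlet problems allows gradient-dependent nonlinearities, so neither a Perron-plus-comparison argument nor a Schauder fixed-point scheme needs to be carried out by hand.
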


\begin{proof}
Consider the equation
\[
\left\{
\begin{array}{rcl}
-\left[(|Du_j|+\frac{1}{j^{1/2p}})^p + a(x)(|Du_j|+\frac{1}{j^{1/2p}})^q\right]{\mathcal I}_{\sigma}(u_j,x) & = & f(x) \;\;\mbox{ in }\;\;B_1 \\
u_j(x) & = & g(x)\;\;\mbox{ in }\;\;\mathbb{R}^d\setminus B_1,
\end{array}
\right.
\]
where $g \in L^1_{\sigma}(\mathbb{R}^d)$  satisfies 
\[
g(x) \leq 1 + |x|^{1+\alpha}\;\;\mbox{ for all }\;\; x\in \mathbb{R}^d,
\]
for any $1+\alpha \in (0,\sigma)$. Since the operator is nonlocal uniformly elliptic, the existence of a viscosity solution $u_j$ is assured by \cite{Barles-Chasseigne-Imbert-2008}. By Lemma \ref{Lipschitz_Regularity2}, we have that the solution $u_j \in C^{0,1}_{loc}(B_1)$, with estimates that are independent of $j$. Hence, there exists a function $u_{\infty} \in C^{\alpha}_{loc}(B_1)$, for some $\alpha \in (0,1)$, such that $u_j \to u_{\infty}$ locally uniformly in $B_1$, and $u_{\infty} \equiv g$ outside $B_1$. By taking $c_j := j^{1/2p}$, we have directly by definition that $u_\infty$ is an approximated viscosity solution to \eqref{main_eq0}.
\end{proof}

As we mentioned before, we will need to deal with equations of the form \eqref{eq_scaled}, hence we first prove some level of compactness for the solutions of such equations. 

\begin{Lemma}\label{Lipschitz_Regularity}
Let  $u \in C(\overline{B}_1)$ be a viscosity solution to \eqref{eq_scaled}. Suppose that A\ref{assump_degree}-A\ref{assump_regularf} are in force. There exists a constant $c_0 >1$ such that if $|\xi|\geq c_0$, then u is locally Lipschitz continuous,\emph{ i. e.,}
\begin{equation*}
|u(x) - u(y)| \leq C|x - y| \quad  \text{for all}\: \: x, y  \in B_{1/2},
\end{equation*}
where the constant $C>0$ does not depend on $\xi$. 
\end{Lemma}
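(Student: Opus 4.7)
The plan is to adapt the Ishii--Lions doubling variables argument from the proof of Lemma \ref{Lipschitz_Regularity2}, where the largeness of $|\xi|$ now plays the role previously held by the regularization parameter $c_j$: namely, ensuring that the degeneracy coefficient $(|D u+\xi|^p + a(x)|Du+\xi|^q)$ stays bounded away from zero at the maximum point of the doubled variable.

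First I would set up exactly the same auxiliary functions as before: let $\tilde\psi \in C^\infty(\mathbb{R}^d)$ vanish on $B_{1/2}$ and equal $1$ on $B_{3/4}^c$, set $\psi := (\osc_{B_1}u + 1)\tilde\psi$, and take $\varphi(t) = t - t^{1+\alpha}$ truncated at some small $t_0$. Define $\phi(x,y) := L\varphi(|x-y|) + \psi(y)$ and $\Phi(x,y) := u(x) - u(y) - \phi(x,y)$. Assuming for contradiction that $\Phi$ attains a strictly positive maximum at $(\bar x,\bar y)$, the verbatim reasoning of Lemma \ref{Lipschitz_Regularity2} forces $\bar y \in B_{3/4}$, $\bar x \in B_{7/8}$, and $\bar x \neq \bar y$, provided $L$ is chosen large enough in terms of $\osc_{B_1}u$.

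Next I would observe that $\xi_{\bar x} := D_x\phi(\bar x,\bar y)$ and $\xi_{\bar y} := -D_y\phi(\bar x,\bar y)$ both satisfy $|\xi_{\bar x}|,\,|\xi_{\bar y}| \le L + \|D\psi\|_{L^\infty}$, since $|\varphi'|\le 1$. Therefore, setting
\[
c_0 := 2L + 2\|D\psi\|_{L^\infty} + 2,
\]
the hypothesis $|\xi|\ge c_0$ yields $|\xi_{\bar x}+\xi|,\,|\xi_{\bar y}+\xi| \ge 1$ by the triangle inequality, so that
\[
(|\xi_{\bar x}+\xi|^p + a(\bar x)|\xi_{\bar x}+\xi|^q) \;\ge\; 1,
\]
and analogously at $\bar y$. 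Dividing the viscosity sub/supersolution inequalities by these coefficients then produces
\[
\mathcal{I}_\delta(u,\Phi_{\bar y},\bar x) - \mathcal{I}_\delta(u,-\Phi_{\bar x},\bar y) \;\ge\; -C\|f\|_{L^\infty(B_1)}.
\]

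From this point the proof runs identically to that of Lemma \ref{Lipschitz_Regularity2}: select a kernel $K\in\mathcal{K}_0$ realizing the inf-sup up to an error, split the nonlocal increments into the cone $\mathcal{C}$ about $\hat e = (\bar x-\bar y)/|\bar x-\bar y|$ and its complement with $\eta = c_1|e|^{2\alpha}$, $\rho = c_1|e|^\alpha$, apply \cite[Corollary 9]{Barles-Chasseigne-Ciomaga-Imbert-12} to obtain $I_K[\mathcal{C}](\varphi,e)\le -cL|e|^{1-\sigma+\alpha(N+2-\sigma)}$, pick $\alpha$ small so that the exponent is negative, and finally take $L$ large (depending only on $\|f\|_{L^\infty}$, $\osc_{B_1}u$, $\|u\|_{L^1_\sigma}$ and universal constants) to reach a contradiction. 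The main obstacle to monitor is the order of parameter choices: $L$ must be fixed first from the data, and only then $c_0=c_0(L)$; since $L$ is independent of $\xi$, so are $c_0$ and the resulting Lipschitz constant. The regime $|\xi+\xi_{\bar x}|\ge 1$ actually becomes more favorable as $|\xi|\to\infty$, because the denominator $|\xi+\xi_{\bar x}|^p$ only grows, so no extra care is required at that end.
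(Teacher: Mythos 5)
Your proposal is correct and follows essentially the same argument as the paper: the same doubling construction as in Lemma \ref{Lipschitz_Regularity2}, with the largeness of $|\xi|$ used to bound the coefficient $|\xi+\xi_{\bar x}|^p + a(\bar x)|\xi+\xi_{\bar x}|^q$ away from zero, after which one divides the viscosity inequalities and concludes as before, fixing $L$ from the data first and then $c_0=c_0(L)$ independently of $\xi$. The only cosmetic difference is that the paper first rescales the equation by $b=|\xi|^{-1}$ (so the condition becomes $|\hat\xi + b\xi_{\bar x}|\geq 1/2$ with right-hand side $b^p f$), whereas you keep the original equation and use the triangle inequality directly; the two are equivalent.
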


\begin{proof} 
First, we rewrite equation \eqref{eq_scaled} as follows
\begin{equation}\label{eq_scal_norm}
-(|\hat{\xi} + b Du|^p + \hat{a}(x)|\hat{\xi} +b Du|^q )\mathcal{I}_{\sigma}(u, x) = b^pf(x) \quad \text{in} \quad B_1,
\end{equation}
where $\hat{\xi} = {\xi}/{|\xi|}$, $b = |\xi|^{-1}$ and $\hat{a}(x) = a(x)|\xi|^{q-p}$.

Considering the same notation as in Lemma \ref{Lipschitz_Regularity2}, we argue through a contradiction argument, by supposing 
\[
\Phi(\bar{x}, \bar{y}) > 0,
\]
where $\bar{x}, \bar{y} \in B_{9/10}$.

After this, we continue by computing $D_x\phi$ and $D_y\phi$ at $(\bar{x}, \bar{y})$, and we observe that $u - {\Phi}_{\bar{y}}$ attains a maximum at $\bar{x}$. Hence, since $u$ is a viscosity solution to \eqref{eq_scal_norm},
we obtain 
\[
- \left(|\hat{\xi} + b{\xi}_{\bar{x}}|^p + \hat{a}(\bar{x})|\hat{\xi} + b {\xi}_{\bar{x}}|^q \right){\mathcal I}_{\delta}(u, {\Phi}_{\bar{y}}, \bar{x})\leq b^p f(\bar{x}). 
\]
Similarly, we have the viscosity inequalities for $u$ at $\bar{y}$,
\[
- \left(|\hat{\xi} + b{\xi}_{\bar{y}}|^p + \hat{a}(\bar{y})|\hat{\xi} + b {\xi}_{\bar{y}}|^q \right){\mathcal I}_{\delta}(u, - {\Phi}_{\bar{x}}, \bar{x})\geq b^p f(\bar{y}).
\]

In the sequel, by choosing $c_0$ such that $b \leq 1/(2L(2 +\alpha))$, we may conclude that
\[
|\hat{\xi} + b{\xi}_{\bar{x}}|, |\hat{\xi} + b{\xi}_{\bar{y}}|\geq 1/2.
\]
Therefore, the former inequalities imply
\[
\begin{array}{ccl}
- {\mathcal I}_{\delta}(u, {\Phi}_{\bar{y}}, \bar{x})
&\leq & \dfrac{ b^p \|f\|_{L^{\infty}(B_1)}}{|\hat{\xi} + b{\xi}_{\bar{x}}|^p + \hat{a}(\bar{x})|\hat{\xi} + b {\xi}_{\bar{x}}|^q } \vspace{0.3cm} \\
&\leq&\dfrac{ b^p \|f\|_{L^{\infty}(B_1)}}{|\hat{\xi} + b{\xi}_{\bar{x}}|^p} \vspace{0.3cm} \\
&\leq& \dfrac{ 2^p \|f\|_{L^{\infty}(B_1)}}{{c^p_0}} \vspace{0.3cm} \\
&\leq& C_1\|f\|_{L^{\infty}(B_1)}. \\
\end{array}
\]
Similarly $- \mathcal{I}_{\delta}(u, - \Phi_{\bar{x}}, \bar{y}) \geq -C_1\|f\|_{L^{\infty}(B_1)}$ and consequently
\[
{\mathcal I}_{\delta}({u, \Phi}_{\bar{y}}, \bar{x}) - \mathcal{I}_{\delta}(u, - \Phi_{\bar{x}}, \bar{y})\geq -  2C_1\|f\|_{L^{\infty}(B_1)}. 
\]
From here we can proceed as in Lemma \ref{Lipschitz_Regularity2} to finish the proof.
\end{proof}

The next lemma deals with the other alternative of the norm of $\xi$, i.e., the case where $|\xi| \leq c_0$.

\begin{Lemma}\label{Lip_reg_2}
Let  $u \in C(\overline{B}_1)$ be a viscosity solution to \eqref{eq_scaled}. Suppose that A\ref{assump_degree}-A\ref{assump_regularf} hold true. If $|\xi|\leq c_0$, where $c_0$ is as in the previous lemma, then u is locally Lipschitz continuous,\emph{ i. e.,}
\begin{equation*}
|u(x) - u(y)| \leq C|x - y| \quad  \text{for all}\: \: x, y  \in B_{1/2},
\end{equation*}
where the constant $C>0$ does not depend on $\xi$. 
\end{Lemma}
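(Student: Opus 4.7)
The plan is to reuse the Ishii--Lions doubling argument from Lemma \ref{Lipschitz_Regularity2} almost verbatim, exploiting that when $|\xi|\leq c_0$ we may absorb $\xi$ into the test-function gradient without rescaling. Concretely, I would fix the same auxiliary objects as before: a cut-off $\psi=(\osc_{B_1}u+1)\tilde\psi$, the concave modulus $\varphi(t)=t-t^{1+\alpha}$ for small $\alpha>0$, and the doubling functional
\[
\Phi(x,y):=u(x)-u(y)-L\varphi(|x-y|)-\psi(y),
\]
and then argue by contradiction, assuming $\Phi$ attains a positive maximum at some $(\bar x,\bar y)\in\overline B_1\times\overline B_1$. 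The localization steps $\bar y\in B_{3/4}$, $\bar x\in B_{7/8}$ and $\bar x\neq\bar y$ go through word for word, since they only use the geometry of $\psi$ and the bound $L\varphi(|\bar x-\bar y|)\leq\osc_{B_1}u$ obtained for $L$ large.

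The step that requires a small modification is the one controlling the gradient entering the coefficient. Setting $\xi_{\bar x}:=D_x\phi(\bar x,\bar y)=L\varphi'(|\bar x-\bar y|)(\bar x-\bar y)/|\bar x-\bar y|$ and analogously $\xi_{\bar y}$, we have $|\xi_{\bar x}|,|\xi_{\bar y}|\geq L\varphi'(|\bar x-\bar y|)-\|D\psi\|_\infty$, and $\varphi'$ is bounded below by a positive universal constant on $(0,t_0)$. Since $|\xi|\leq c_0$, choosing $L\geq L_0(c_0)$ large enough forces
\[
|\xi_{\bar x}+\xi|,\;|\xi_{\bar y}+\xi|\geq 1/2.
\]
Plugging this into the viscosity inequalities satisfied by $u$ at $\bar x$ and $\bar y$ for \eqref{eq_scaled} gives
\[
-\mathcal I_\delta(u,\Phi_{\bar y},\bar x)\leq\frac{\|f\|_{L^\infty(B_1)}}{|\xi_{\bar x}+\xi|^p}\leq 2^p\|f\|_{L^\infty(B_1)},
\]
and the symmetric lower bound at $\bar y$, so that
\[
\mathcal I_\delta(u,\Phi_{\bar y},\bar x)-\mathcal I_\delta(u,-\Phi_{\bar x},\bar y)\geq -C\|f\|_{L^\infty(B_1)}.
\]

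From this point the argument is identical to the one in Lemma \ref{Lipschitz_Regularity2}. Splitting the nonlocal difference over the cone $\mathcal C$ and its complement, using the maximum-point inequalities for $\Phi$, the smoothness of $\psi$, and the crucial estimate \eqref{crucial} of Barles--Chasseigne--Ciomaga--Imbert on $I_K[\mathcal C](\varphi,e)$ with the choices $\eta=c_1|e|^{2\alpha}$, $\rho=c_1|e|^\alpha$, yields
\[
-C_1\bigl(\|f\|_{L^\infty(B_1)}+\osc_{B_1}u+\|u\|_{L^1_\sigma}+1\bigr)\leq -cL|\bar x-\bar y|^{-\theta},
\]
with $\theta>0$ fixed independently of $\sigma\in(1,2)$. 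Combined with $|\bar x-\bar y|\leq C_2\osc_{B_1}u\,L^{-1}$, this forces the contradiction once
\[
L:=C_3\bigl(\|f\|_{L^\infty(B_1)}+\osc_{B_1}u+1\bigr)+L_0(c_0)
\]
is chosen large enough, depending only on $\lambda,\Lambda,d,c_0$. The main technical point I expect to be the delicate one is precisely the calibration $L\geq L_0(c_0)$ in the gradient lower bound: it is what makes the constant in the Lipschitz estimate independent of $\xi$ throughout the regime $|\xi|\leq c_0$, and it ties the argument cleanly to Lemma \ref{Lipschitz_Regularity} to cover all $\xi\in\mathbb R^d$.
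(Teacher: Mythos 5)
Your proposal is correct and follows essentially the same route as the paper: the same Ishii--Lions doubling with $\psi$, $\varphi$, $\Phi$, the same calibration of $L$ (large depending on $\osc_{B_1}u$ and $c_0$) to force $|\xi+\xi_{\bar x}|,|\xi+\xi_{\bar y}|$ bounded below so the degenerate coefficient can be dropped, and then the identical cone estimate of Lemma \ref{Lipschitz_Regularity2} to reach the contradiction. The only cosmetic difference is your threshold $1/2$ (giving $2^p\|f\|_{L^\infty(B_1)}$) versus the paper's threshold $1$, which is immaterial.
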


\begin{proof}
The proof follows the same general lines as in the proof of Lemma \ref{Lipschitz_Regularity}. We define $\psi, \phi$ and $\Phi$ as in the previous lemma. As before, we can assume that $\Phi$ attains its maximum at $(\bar{x},\bar{y}) \in B_{7/8}\times B_{7/8}$  and we argue by contradiction assuming that $\Phi(\bar{x},\bar{y}) < 0$. By taking $L$ sufficient large, depending only on $\osc_{B_1} u$ and $c_0$ we can infer that
\[
|\xi + {\xi}_{\bar{x}}|, |\xi + {\xi}_{\bar{y}}| \geq 1.
\]
Since $u$ is a viscosity solution to \eqref{eq_scaled}, we have 
\[
- \left(|\xi + {\xi}_{\bar{x}}|^p + a(\bar{x})|{\xi} +{\xi}_{\bar{x}}|^q \right){\mathcal I}_{\delta}(u, {\Phi}_{\bar{y}}, \bar{x})\leq f(\bar{x}), 
\]
which implies 
\[
-{\mathcal I}_{\delta}(u, {\Phi}_{\bar{y}}, \bar{x}) \leq \|f\|_{L^\infty(B_1)}.
\]
Similarly $- \mathcal{I}_{\delta}(u, - \Phi_{\bar{x}}, \bar{y}) \geq -\|f\|_{L^{\infty}(B_1)}$ which implies
\[
{\mathcal I}_{\delta}({u, \Phi}_{\bar{y}}, \bar{x}) - \mathcal{I}_{\delta}(u, - \Phi_{\bar{x}}, \bar{y})\geq -  2\|f\|_{L^{\infty}(B_1)}. 
\]
From here we can proceed as in Lemma \ref{Lipschitz_Regularity2} to finish the proof.
\end{proof}

By combining Lemma \ref{Lipschitz_Regularity} and Lemma \ref{Lip_reg_2}, we obtain a compactness result for solutions of \eqref{eq_scaled}. In particular, we obtain Lipschitz estimates for the viscosity solutions of \eqref{main_eq0}.

\begin{Proposition}\label{Lip_reg}
Let  $u \in C(\overline{B}_1)$ be a viscosity solution to \eqref{eq_scaled}. Suppose that A\ref{assump_degree}-A\ref{assump_regularf} are in force. Then u is locally Lipschitz continuous,\emph{ i. e.,}
\begin{equation*}
|u(x) - u(y)| \leq C|x - y| \quad  \text{for all}\: \: x, y  \in B_{1/2},
\end{equation*}
where the constant $C>0$ does not depend on $\xi$. 
\end{Proposition}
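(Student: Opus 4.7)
The proof plan is essentially a dichotomy argument that patches together the two preceding lemmas. Let $c_0 > 1$ be the threshold constant produced in Lemma \ref{Lipschitz_Regularity}. Given a viscosity solution $u$ to \eqref{eq_scaled} with an arbitrary $\xi \in \mathbb{R}^d$, I would split into two cases according to whether $|\xi| \geq c_0$ or $|\xi| \leq c_0$.

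In the first case, $|\xi| \geq c_0$, Lemma \ref{Lipschitz_Regularity} applies directly and yields
\[
|u(x) - u(y)| \leq C_1 |x-y| \quad \text{for all } x,y \in B_{1/2},
\]
with $C_1$ independent of $\xi$. In the second case, $|\xi| \leq c_0$, Lemma \ref{Lip_reg_2} applies (precisely because $c_0$ in its hypothesis is taken to be the same universal constant from the previous lemma), giving the analogous estimate with a constant $C_2$ that is also independent of $\xi$. Setting $C := \max(C_1, C_2)$ produces the uniform bound claimed in the proposition.

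The main thing to verify, rather than any hard step, is that the threshold $c_0$ furnished by Lemma \ref{Lipschitz_Regularity} is a purely universal quantity depending only on $d, p, q, \lambda, \Lambda, \|f\|_{L^\infty(B_1)}$, $\osc_{B_1} u$ and $\|u\|_{L^1_\sigma}$, so that the two regimes $|\xi| \geq c_0$ and $|\xi| \leq c_0$ genuinely cover all of $\mathbb{R}^d$ with a single value of $c_0$. This is already implicit in the statements of the two lemmas, since in Lemma \ref{Lip_reg_2} the constant $c_0$ is explicitly taken to be the one produced by Lemma \ref{Lipschitz_Regularity}.

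The only mild subtlety, and the closest thing to an obstacle, is notational: the final constant $C$ must be shown to depend only on $d, p, \lambda, \Lambda$ (and the norms of $u$ and $f$ entering through $\osc_{B_1} u$ and $\|u\|_{L^1_\sigma}$), and in particular not on $\xi$ nor on $\sigma$ in a degenerating way as $\sigma \to 2$. Both lemmas provide exactly such $\sigma$-uniform constants thanks to the bound $\epsilon_0 \leq c \leq \epsilon_0^{-1}$ on the constant in estimate \eqref{crucial}, so taking the maximum preserves this uniformity and the proof is complete.
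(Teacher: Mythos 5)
Your proof is correct and is exactly what the paper does: Proposition \ref{Lip_reg} is stated as the direct combination of Lemma \ref{Lipschitz_Regularity} (case $|\xi|\geq c_0$) and Lemma \ref{Lip_reg_2} (case $|\xi|\leq c_0$), with the final constant taken uniform in $\xi$. Nothing further is needed.
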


The next lemma is the core of the arguments needed for the proof of Theorem \ref{Degenerate Theorem at least one solution}.

\begin{Lemma}\label{approx_lemma2}
Let $u \in C(\overline{B}_1)$ be a normalized approximated viscosity solution to \eqref{eq_scaled}. Suppose that A\ref{assump_degree}-A\ref{assump_regularf} are in force. Given $\mathfrak{M}, \delta>0$ and $\alpha \in (0, 1)$, there exists $\varepsilon>0$ such that if 
\begin{equation}\label{eq0_lemma2}
|u(x)|\leq \mathfrak{M}\left(1 + |x|^{1 +\alpha}\right) \quad \text{for\;all}\; \; x \in \mathbb{R}^d
\end{equation}
and 
\[
\| f\|_{L^{\infty}(B_1)} \leq \varepsilon,
\]
then we can find a function $h \in {\mathcal C}_{loc}^{1, \bar{\alpha}}(B_1)$ with $\bar{\alpha} \in (0, 1)$ satisfying 
\[
\|u - h\|_{L^{\infty}(B_{3/4})} \leq \delta.
\]
\end{Lemma}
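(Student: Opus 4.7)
The plan is to argue by contradiction via a compactness-approximation scheme. Suppose the conclusion fails: there exist $\mathfrak{M}, \delta > 0$, $\alpha \in (0,1)$, and sequences $(u_n)$, $(\xi_n)$, $(f_n)$ with each $u_n$ an approximated viscosity solution to
\[
-\bigl(|Du_n + \xi_n|^p + a(x)|Du_n + \xi_n|^q\bigr)\mathcal{I}_\sigma(u_n, x) = f_n(x) \quad \mbox{in } B_1,
\]
satisfying $\|f_n\|_{L^\infty(B_1)} \leq 1/n$ and $|u_n(x)| \leq \mathfrak{M}(1 + |x|^{1+\alpha})$, but for which $\|u_n - h\|_{L^\infty(B_{3/4})} > \delta$ for every $h \in C^{1,\bar\alpha}_{loc}(B_1)$.

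First I would apply Proposition \ref{Lip_reg} to obtain uniform local Lipschitz bounds for the $u_n$ in $B_{1/2}$, independent of $\xi_n$. Combined with the uniform $L^1_\sigma$ control coming from \eqref{eq0_lemma2}, the Arzela-Ascoli theorem and a diagonal argument yield a subsequence converging locally uniformly in $B_1$ to some $u_\infty \in C^{0,1}_{loc}(B_1) \cap L^1_\sigma(\mathbb{R}^d)$.

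Next, I would split the analysis according to whether $(\xi_n)$ is bounded. If $|\xi_n| \to \infty$ along a subsequence, one divides the equation by $|\xi_n|^p$ and renormalizes as in Lemma \ref{Lipschitz_Regularity}; the prefactor of $\mathcal{I}_\sigma$ then remains bounded below while the right-hand side tends to zero, so in the limit $u_\infty$ is a viscosity solution of $\mathcal{I}_\sigma(u_\infty, x) = 0$ in $B_1$. If $\xi_n \to \xi \in \mathbb{R}^d$, I would invoke the stability statement for approximated viscosity solutions (Proposition \ref{prop_stability}), which is tailored to this situation and again yields $\mathcal{I}_\sigma(u_\infty, x) = 0$. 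In either scenario the limit equation is uniformly elliptic and the $C^{1,\bar\alpha}$-regularity theory of Caffarelli-Silvestre provides $u_\infty \in C^{1,\bar\alpha}_{loc}(B_1)$. Taking $h = u_\infty$, local uniform convergence $u_n \to u_\infty$ in $B_{3/4}$ contradicts the standing assumption $\|u_n - u_\infty\|_{L^\infty(B_{3/4})} > \delta$ for $n$ large, closing the argument.

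The principal obstacle is the case $\xi_n \to 0$: here the coefficient $|Du_n + \xi_n|^p + a(x)|Du_n + \xi_n|^q$ may degenerate along the sequence, so one cannot simply pass to the limit in the usual viscosity sense. This is exactly why Definition \ref{new_def} and Proposition \ref{prop_stability} are formulated in terms of regularized problems with a vanishing parameter $c_j$ satisfying $jc_j^p \to \infty$; both must be applied carefully to identify the correct limit equation. A secondary technical point is tracking the growth bound in \eqref{eq0_lemma2} through the extraction, which is what legitimizes convergence of the nonlocal operator on test functions (via \cite[Lemma 5]{Caffarelli-Silvestre2011}).
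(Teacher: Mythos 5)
Your proposal is correct and follows essentially the same route as the paper's proof: contradiction, uniform Lipschitz compactness via Proposition \ref{Lip_reg} (legitimate since approximated viscosity solutions are viscosity solutions by Proposition \ref{equiv_def}), passage to the limit via Proposition \ref{prop_stability}, and then $C^{1,\bar{\alpha}}$ regularity of $\mathcal{I}_\sigma$-harmonic functions to take $h = u_\infty$. The only cosmetic difference is your case split on the boundedness of $(\xi_n)$, which is unnecessary: Proposition \ref{prop_stability} makes no assumption on $(\xi_j)$, since the degenerate factor is bounded below by $c_j^p$ at the level of the regularized problems and $jc_j^p \to \infty$ handles all regimes at once.
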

\begin{proof}
We argue by contradiction. That is, there are $\mathfrak{M}_0, \delta_0 >0, \alpha_0 \in (0, 1)$ and sequences $(\xi_j)_{j \in \mathbb{N}}$,  $(u_j)_{j \in \mathbb{N}}$,  $(f_j)_{j \in \mathbb{N}}$ such that $u_j$ is an approximated viscosity solution to
\begin{equation}\label{eq_approx2}
-\left(|Du_j + \xi_j|^p + a_j(x)|Du_j + \xi_j|^q\right){\mathcal I}_{\sigma}(u_j, x)= f_j(x) \quad \text{in} \quad B_1,
\end{equation}
\[
|u_j(x)|\leq \mathfrak{M}_0\left(1 + |x|^{1 +\alpha_0}\right) \;\; \mbox{ in }\; \mathbb{R}^d,
\]
and 
\[
\| f_j\|_{L^{\infty}(B_1)} \leq 1/j,
\]
however
\begin{equation}\label{contradiction_argument2}
\|u_j - h\|_{L^{\infty}(B_{3/4})} \geq \delta_0
\end{equation}
for all $h \in {\mathcal C}_{loc}^{1, \bar{\alpha}}(B_1)$.

~Thanks to Proposition \ref{Lip_reg}, we can guarantee the existence of a function $u_\infty$ such that $u_j \rightarrow u_{\infty}$ locally uniformly in $B_{1}$, recall that approximated viscosity solutions are also viscosity solutions by Proposition \ref{equiv_def}. The contradiction assumptions combined with Proposition \ref{prop_stability} allow us to pass the limit in \eqref{eq_approx2} and conclude that $u_\infty$ is a viscosity solution to 
\[
-{\mathcal I}_{\sigma}(u_\infty, x) = 0 \quad \text{in} \quad B_{9/10}.
\]
Hence, we infer that $u_\infty$ is of class $C^{1, \alpha}_{loc}$. Finally, by taking $h = u_\infty$, we arrive in a contradition with \eqref{contradiction_argument2} for $j$ sufficiently large.
\end{proof}

\begin{Proposition}\label{Deg_Step1}
Let $u\in C(\overline{B}_1)$ be an approximated viscosity solution to \eqref{main_eq0}. Assume that A\ref{assump_degree}-A\ref{assump_regularf} hold true. Given $\mathfrak{M}>0$, there exists $\varepsilon>0$ such that, if 
\begin{equation*}
|u(x)|\leq \mathfrak{M}\left(1 + |x|^{1 +\alpha}\right) \quad \text{for\;all}\; \; x \in \mathbb{R}^d
\end{equation*}
and 
\[
\|f\|_{L^{\infty}(B_1)}\leq \varepsilon,
\]
one can find a constant $0 <\rho\ll 1/2$ and an affine function $\ell$ for which
\[
\|u - \ell \|_{L^{\infty}(B_{\rho})}\leq\rho^{1+\alpha},
\]
for every $\alpha \in (0, \bar{\alpha})$.
\end{Proposition}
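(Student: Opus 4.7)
The plan is to combine the approximation lemma (Lemma \ref{approx_lemma2}) with the $C^{1,\bar\alpha}$-regularity of the limiting $\mathcal{I}_\sigma$-harmonic function, and then transfer that regularity to $u$ by a triangle inequality. The affine function $\ell$ will be taken as the first-order Taylor expansion at the origin of the approximating function $h$.

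First, I fix $\alpha \in (0,\bar{\alpha})$ and choose $\rho \in (0, 1/2)$ sufficiently small such that
\[
C_0 \rho^{\bar{\alpha} - \alpha} \leq \tfrac{1}{2},
\]
where $C_0 = C_0(d,\lambda,\Lambda)$ is the universal constant controlling the $C^{1,\bar\alpha}$-seminorm of $\mathcal{I}_\sigma$-harmonic functions on $B_{3/4}$ (this is where the restriction $\alpha < \bar\alpha$ is used). Next, I set $\delta := \tfrac{1}{2}\rho^{1+\alpha}$ and invoke Lemma \ref{approx_lemma2} with this $\delta$, together with the prescribed $\mathfrak{M}$ and $\alpha$. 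This yields the $\varepsilon = \varepsilon(\mathfrak{M},\delta,\alpha) > 0$ and produces $h \in C^{1,\bar\alpha}_{loc}(B_1)$ satisfying
\[
\|u - h\|_{L^\infty(B_{3/4})} \leq \delta = \tfrac{1}{2}\rho^{1+\alpha}.
\]

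Then I define $\ell(x) := h(0) + Dh(0) \cdot x$. By the $C^{1,\bar\alpha}$-estimate for $h$ (applied with a uniformly bounded seminorm, since $h$ inherits the universal regularity theory from Proposition \ref{prop_stability}, and since $\|h\|_{L^\infty(B_{3/4})}$ is controlled by $\|u\|_{L^\infty(B_{3/4})} + \delta \leq \mathfrak{M} + 1$), one has
\[
|h(x) - \ell(x)| \leq C_0 |x|^{1+\bar{\alpha}} \quad \text{for every } x \in B_{1/2}.
\]
In particular, for $x \in B_\rho$, we get $|h(x) - \ell(x)| \leq C_0 \rho^{1+\bar\alpha} = C_0 \rho^{\bar\alpha - \alpha}\rho^{1+\alpha} \leq \tfrac{1}{2}\rho^{1+\alpha}$ by our choice of $\rho$. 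Combining both bounds via the triangle inequality yields
\[
\|u - \ell\|_{L^\infty(B_\rho)} \leq \|u - h\|_{L^\infty(B_\rho)} + \|h - \ell\|_{L^\infty(B_\rho)} \leq \tfrac{1}{2}\rho^{1+\alpha} + \tfrac{1}{2}\rho^{1+\alpha} = \rho^{1+\alpha},
\]
as desired.

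The main obstacle in this argument is the careful order of quantifiers: one must first fix $\rho$ (depending only on universal constants and on the gap $\bar\alpha - \alpha$), and only afterwards select $\delta$ and the corresponding $\varepsilon$ coming from Lemma \ref{approx_lemma2}; otherwise the two error terms cannot both be absorbed into $\rho^{1+\alpha}$. A secondary point requiring verification is that $Dh(0)$ admits a universal bound, which follows from the global $L^\infty$-control on $h$ inherited from the growth condition on $u$ and the approximation estimate, together with interior $C^{1,\bar\alpha}$-estimates for $\mathcal{I}_\sigma$-harmonic functions that are uniform in $\sigma \in (1,2)$.
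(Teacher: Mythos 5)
Your proposal is correct and follows essentially the same route as the paper: take $\ell$ to be the first-order Taylor expansion of the approximating function $h$ from Lemma \ref{approx_lemma2}, use the $C^{1,\bar\alpha}$-estimate for $h$ on $B_\rho$, and close via the triangle inequality after fixing $\rho$ so that $C_0\rho^{\bar\alpha-\alpha}\le \tfrac12$ and then setting $\delta=\tfrac12\rho^{1+\alpha}$, which is exactly the choice \eqref{rho_choice} made in the paper. Your remark on the order of quantifiers ($\rho$ first, then $\delta$, then $\varepsilon$) matches the paper's logic as well.
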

\begin{proof}
Let $h \in \mathcal{C}^{1, \bar{\alpha}}(B_{3/4})$ be the function from the Lemma \ref{approx_lemma2} such that
\[
\|u - h\|_{L^{\infty}(B_{3/4})} \leq \delta.
\]
From the regularity available for $h$, we get
\[
\|h - [h(0)+ Dh(0)\cdot x] \|_{L^{\infty}(B_{\rho})} \leq C_0\rho^{1+\bar{\alpha}},
\]
where $C_0=C_0(\lambda, \Lambda, d, p)$ is a positive constant. Set $\ell(x):= h(0) + Dh(0)\cdot x$, by using the triangular inequality, we obtain
\[
\begin{array}{ccl}
\sup_{B_{\rho}}|u(x)-\ell(x)| &\leq & \sup_{B_{\rho}}|u(x)-h(x)| + \sup_{B_{\rho}}|h(x)-\ell(x)|\vspace{0.2cm}\\
&\leq& \delta + C_0\rho^{1+\bar{\alpha}}\\
&\leq& \rho^{1+\alpha},
\end{array}
\]
provided we choose $\rho$ and $\delta$ such that 
\begin{equation}\label{rho_choice}
\rho:= \min\left[ \left( \dfrac{1}{2C_0}\right)^{\frac{1}{\bar{\alpha}-\alpha}}, \left(\dfrac{1}{(1+C_0)100}\right)^{\frac{1}{\bar{\alpha}-\alpha}}\right]
\quad\text{and} \quad \delta:= \dfrac{\rho^{1+\alpha}}{2}.
\end{equation}
Notice that the universal choice of $\delta$ determines the value of $\varepsilon$ through the Lemma \ref{approx_lemma2}.
\end{proof}

\begin{Proposition}\label{Deg_step2}
Let $u \in C(\overline{B}_1)$ be an approximated viscosity solution to \eqref{main_eq0}. Suppose that A\ref{assump_degree}-A\ref{assump_regularf} are in force. Given $\mathfrak{M}>0$, there exists $\varepsilon>0$ such that, if 
\begin{equation*}
|u(x)|\leq \mathfrak{M}\left(1 + |x|^{1 +\alpha}\right) \quad \text{for\;all}\; \; x \in \mathbb{R}^d
\end{equation*}
and
\[
\| f\|_{L^{\infty}(B_1)}\leq \varepsilon,
\]
then we can find a sequence of affine functions $(\ell_k)_{k \in \mathbb{N}}$ of the form $\ell_k(x):= a_k + b_k \cdot x$ satisfying 
\[
\sup_{B_{\rho^k}}|u(x) - \ell_k(x)|\leq \rho^{k(1+\alpha)}
\]
such that
\begin{equation}\label{eq_06}
|a_{k+1}- a_{k}|\leq C \rho^{k(1+\alpha)} \quad \text{and}\quad |b_{k+1} - b_{k}|\leq  C \rho^{k \alpha}
\end{equation}
for all 
\begin{equation}\label{optimal_alpha}
\alpha  \in \left(0, \min\left(\bar{\alpha}, \dfrac{\sigma - 1}{p+1} \right) \right),
\end{equation}
where  $C=C(\lambda, \Lambda, d, p)$ is a positive constant.
\end{Proposition}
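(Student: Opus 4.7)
The proof is by induction on $k$. Setting $\ell_{-1} = \ell_0 \equiv 0$, the scaling reduction \eqref{prop_scal} lets us assume $\|u\|_{L^\infty(B_1)} \le 1$, so the base case $k=0$ is immediate and Proposition \ref{Deg_Step1} supplies $\ell_1$. Assume the statement has been established through index $k \ge 1$; the plan is to produce $\ell_{k+1}$ by rescaling at scale $\rho^k$ and invoking Proposition \ref{Deg_Step1} one more time.

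Introduce
\[
v_k(x) := \dfrac{u(\rho^k x) - \ell_k(\rho^k x)}{\rho^{k(1+\alpha)}}, \qquad x \in \mathbb{R}^d.
\]
Since $\mathcal{I}_\sigma$ annihilates affine functions (constants cancel in $u(x+y)-u(x)$, linear parts vanish in the principal value against the symmetric kernel) and $\mathcal{I}_\sigma(u(\rho^k\cdot),x) = \rho^{k\sigma}\mathcal{I}_\sigma(u,\rho^k x)$ by the order-$\sigma$ homogeneity, one gets $\mathcal{I}_\sigma(v_k,x) = \rho^{k(\sigma-1-\alpha)}\mathcal{I}_\sigma(u,\rho^k x)$. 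Combining this with $Du(\rho^k x) = \rho^{k\alpha}Dv_k(x) + b_k$, the equation \eqref{main_eq0} evaluated at $\rho^k x$ rearranges to
\[
-\bigl(|Dv_k + \xi_k|^p + \tilde{a}_k(x)\,|Dv_k + \xi_k|^q\bigr)\,\mathcal{I}_\sigma(v_k,x) = \tilde{f}_k(x) \quad \text{in } B_1,
\]
with
\[
\xi_k := \dfrac{b_k}{\rho^{k\alpha}}, \quad \tilde{a}_k(x) := \rho^{k\alpha(q-p)}a(\rho^k x), \quad \tilde{f}_k(x) := \rho^{k[(\sigma-1)-\alpha(p+1)]}f(\rho^k x).
\]
The constraint $\alpha < (\sigma-1)/(p+1)$ in \eqref{optimal_alpha} is precisely what keeps the exponent of $\rho$ in $\tilde{f}_k$ nonnegative, so $\|\tilde{f}_k\|_{L^\infty(B_1)} \le \|f\|_{L^\infty(B_1)} \le \varepsilon$, and $\|\tilde{a}_k\|_{L^\infty} \le \|a\|_{L^\infty}$ since $q \ge p$ and $\rho < 1$. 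Rescaling an approximating sequence for $u$ by $\tilde{c}_j := c_j\rho^{-k\alpha}$ (and an analogous shift of $\xi_j$) produces an approximating sequence for $v_k$, with $j\tilde{c}_j^{\,p}\to\infty$ preserved, so $v_k$ remains an approximated viscosity solution.

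The delicate point is verifying $|v_k(x)| \le \mathfrak{M}(1+|x|^{1+\alpha})$ on $\mathbb{R}^d$ with the \emph{same} constant $\mathfrak{M}$ as in the hypothesis: this is the main technical obstacle. For $|x| \le \rho^{-k}$ one iterates the inductive estimates $\|u-\ell_j\|_{L^\infty(B_{\rho^j})} \le \rho^{j(1+\alpha)}$, $j \le k$, on successive dyadic annuli; for $|x| \ge \rho^{-k}$ one combines the assumed growth of $u$ with the telescoping bound
\[
|b_k| \le \sum_{j=0}^{k-1}|b_{j+1}-b_j| \le C\sum_{j=0}^{k-1}\rho^{j\alpha} \le \dfrac{C}{1-\rho^\alpha},
\]
which is uniform in $k$ by the inductive estimate \eqref{eq_06}. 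This bookkeeping is what forces the smallness of $\rho$ prescribed in \eqref{rho_choice}. Once it is in place, Proposition \ref{Deg_Step1} applied to $v_k$ produces an affine $\tilde{\ell}(x) = \tilde{a} + \tilde{b}\cdot x$ with $|\tilde{a}|,|\tilde{b}| \le C$ and $\|v_k - \tilde{\ell}\|_{L^\infty(B_\rho)} \le \rho^{1+\alpha}$. Setting
\[
\ell_{k+1}(x) := \ell_k(x) + \rho^{k(1+\alpha)}\tilde{a} + \rho^{k\alpha}\tilde{b}\cdot x,
\]
so that $a_{k+1}-a_k = \rho^{k(1+\alpha)}\tilde{a}$ and $b_{k+1}-b_k = \rho^{k\alpha}\tilde{b}$, and undoing the scaling yield $\|u-\ell_{k+1}\|_{L^\infty(B_{\rho^{k+1}})} \le \rho^{(k+1)(1+\alpha)}$ together with \eqref{eq_06}, closing the induction.
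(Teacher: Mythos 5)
Your overall scheme (rescale $v_k=\rho^{-k(1+\alpha)}(u(\rho^k\cdot)-\ell_k(\rho^k\cdot))$, check the scaled equation, reapply Proposition \ref{Deg_Step1}, telescope) is the same as the paper's, and your bookkeeping of the scaled data is right: the shift is indeed $\xi_k=\rho^{-k\alpha}b_k$, $\tilde a_k(x)=\rho^{k\alpha(q-p)}a(\rho^k x)\ge 0$, and $\tilde f_k=\rho^{k[(\sigma-1)-\alpha(p+1)]}f(\rho^k\cdot)$, so the restriction $\alpha<\tfrac{\sigma-1}{p+1}$ in \eqref{optimal_alpha} is exactly what keeps $\|\tilde f_k\|_{L^\infty(B_1)}\le\varepsilon$. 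The genuine gap is in the step you yourself call the main obstacle: the growth bound for $v_k$ that lets you invoke Proposition \ref{Deg_Step1} uniformly in $k$. You assert $|v_k(x)|\le\mathfrak{M}(1+|x|^{1+\alpha})$ with the \emph{same} constant $\mathfrak{M}$, but the argument you sketch does not give that. For $|x|\ge\rho^{-k}$ your telescoping bound yields at best a constant of the form $2\mathfrak{M}+C(1-\rho^{\alpha})^{-1}$, and the dyadic-annulus estimate for $1\le|x|\le\rho^{-k}$ yields a constant of order $(1+C)\rho^{-(1+\alpha)}$; neither is $\mathfrak{M}$. Worse, if one tries to propagate the bound scale by scale with the \emph{same} exponent $1+\alpha$ as the approximation rate, the recursion $v_{k+1}(x)=\rho^{-(1+\alpha)}\bigl(v_k(\rho x)-\tilde\ell_k(\rho x)\bigr)$ multiplies the constant by a factor larger than $1$ at each step, and no choice of $\rho$ fixes this, because there is no gain of the form $\rho^{\bar\alpha-\alpha}$ to absorb the additive affine corrections. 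Your remark that ``this bookkeeping is what forces the smallness of $\rho$ in \eqref{rho_choice}'' is therefore not substantiated: \eqref{rho_choice} is calibrated by the gap $\bar\alpha-\alpha$, which never enters your computation.

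This matters because $\varepsilon$ in Proposition \ref{Deg_Step1} (through Lemma \ref{approx_lemma2}) depends on the growth constant: if the constant in front of $1+|x|^{1+\alpha}$ drifts with $k$, or is only shown to be some larger $\mathfrak{M}'$ after $\rho$, $C_0$, $\varepsilon$ have already been fixed from $\mathfrak{M}$, the induction does not close without a circular readjustment of constants. The paper's resolution is precisely the claim \eqref{nonlocal_assumption_v_k}: one proves $|v_k(x)|\le 1+|x|^{1+\bar\alpha}$ with the \emph{larger} exponent $\bar\alpha>\alpha$ and a fixed constant, by a secondary induction in which both the far regime $\rho|x|>1/2$ and the near regime $\rho|x|\le 1/2$ produce a prefactor $\rho^{\bar\alpha-\alpha}(\,\cdot\,)$ that the choice \eqref{rho_choice} makes $\le 1$; the near regime also uses the approximating function $\tilde h$ from Lemma \ref{approx_lemma2} rather than only the telescoped affine data. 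To repair your proof you should replace the claimed bound $\mathfrak{M}(1+|x|^{1+\alpha})$ by this $\bar\alpha$-growth bound and carry out that two-case induction; the rest of your argument (including the passage from $v_k$ to $\ell_{k+1}$ and the estimates \eqref{eq_06}) then goes through as you wrote it.
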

\begin{proof}
We proceed by induction argument on $j$. The case $j = 1$ is the content of Proposition \ref{Deg_Step1}. Suppose the statement have been verified for $j= 1, \ldots, k$. We shall prove the case $j = k+1$. Define the auxiliary function $v_k: \mathbb{R}^d \rightarrow \mathbb{R}$ such that
\[
v_k(x)= \dfrac{u(\rho^k x) - \ell_k(\rho^k x)}{\rho^{k(1+\alpha)}}.
\]
By the induction hypotheses $\| v_k\|_{L^{\infty}(\overline{B}_1)}\leq 1$. In addition, $v_k$ solves 
\[
- \left(|Dv_k +\rho^kb_k|^p + \tilde{a}(x)|Dv_k +\rho^kb_k|^q\right)\mathcal{I}_{\sigma}(x,v_k) = \tilde{f}(x),
\]
where $\tilde{a}(x)= a(\rho^kx)\rho^{k\alpha(q-p)}$ and $\tilde{f}(x)= \rho^{k\sigma - k(\alpha +\alpha p +1)}f(\rho^k x)$. Observe that $\tilde{a}(x)\geq 0$ and $\tilde{a} \in C(B_1)$. Let $\alpha$ be as in \eqref{optimal_alpha}, so that $\sigma - \alpha(1+p) - 1\geq 0$. This implies that $\| \tilde{f}\|_{L^{\infty}(B_1)}\leq \varepsilon$.
Now, we claim that
\begin{equation}\label{nonlocal_assumption_v_k}
|v_k(x)| \leq 1 + |x|^{1 + \bar{\alpha}} \quad \text{for\;all}\; \; x \in \mathbb{R}^d.
\end{equation}
Once we have verified \eqref{nonlocal_assumption_v_k}, we can apply the Proposition \ref{Deg_Step1} to $v_k$ and obtain
\[
\sup_{B_{\rho}}|v_k(x) - \tilde{\ell}_k(x)|\leq \rho^{1 +\alpha},
\]
consequently,
\[
\sup_{B_{\rho^{k+1}}}|u(x) - \ell_{k+1}(x)|\leq \rho^{k(1 +\alpha)},
\]
where $\ell_{k+1}(x):= \ell_k(x) + \rho^{k(1+\alpha)}\tilde{\ell_k}(\rho^{-k}x)$ with  $|a_{k+1} - a_{k}|\leq C \rho^{k(1+\alpha)}$ and $|b_{k+1} - b_{k}|\leq  C \rho^{k \alpha }$, finishing the proof.

We again resort to an induction argument. The case $k =0$, we take $ v_0 = u$. Suppose that we have proved the statement in the case $k=0,...,m$. We shall verify the case $k= m+1$. Notice that
\[
v_{m+1}(x)= \dfrac{v_m(\rho x) - \tilde{\ell}_m(\rho x)}{\rho^{1+\alpha}}.
\] 
If $|x| \rho>1/2$, we have
\[
\begin{array}{ccl}
|v_{m+1}(x)| &\leq& \rho^{-(1+\alpha)} \left(|v_m(\rho x)| +|\tilde{\ell}_m(\rho x)| \right)\vspace{0.2cm}\\
&\leq&\rho^{-(1+\alpha)}(1+ \rho^{1+\bar{\alpha}}|x|^{1+\bar{\alpha}}) + \rho^{-(1+\alpha)}C_0(1 + \rho|x|)\vspace{0.2cm}\\
&\leq& \rho^{(\bar{\alpha} - \alpha)}(5 +6 C_0)|x|^{1 +\bar{\alpha}}\vspace{0.2cm}\\
&\leq & |x|^{1+\bar{\alpha}}.
\end{array}
\]
where in the last inequality we used \eqref{rho_choice}. On the other hand, if $|x|\rho \leq 1/2$, we obtain
\[
\begin{array}{ccl}
|v_{m+1}(x)|&\leq& \rho^{-(1+\alpha)}(|v_{m}(\rho x) - \tilde{h}(\rho x)| + |\tilde{h}(\rho x) -\tilde{\ell}_m(\rho x)|\vspace{0.2cm}\\
&\leq &  \rho^{-(1+\alpha)}\left(\dfrac{\rho^{1+\alpha}}{2}+ C_0\rho^{1+ \bar{\alpha}}|x|^{1+ \bar{\alpha}}\right) \vspace{0.2cm}\\
&\leq & 1/2 + C_0 \rho^{(\bar{\alpha} - \alpha)}|x|^{1 + \bar{\alpha}}\\
&\leq & |x|^{1+\bar{\alpha}}.
\end{array}
\]
where $\tilde{h}$ comes from the Lemma \ref{approx_lemma2} and we used \eqref{rho_choice} again in the last inequality. This finishes the proof.
\end{proof}

Now we are ready to present the proof of Theorem \ref{Degenerate Theorem at least one solution}. 
 
\begin{proof}[Proof of the Theorem \ref{Degenerate Theorem at least one solution}.]
From \eqref{eq_06}, we conclude that the sequences $(a_j)_{j \in \mathbb{N}}$ and $(b_j)_{j \in \mathbb{N}}$ are Cauchy sequences. Hence, there exist constants $a_{\infty}$ and $b_{\infty}$ such that
\[
\lim_{j \rightarrow \infty}a_j = a_{\infty} \quad \mbox{and} \quad \lim_{j \rightarrow \infty}b_j = b_{\infty}.
\]
Moreover, we have the estimates
\[
|a_j - a_{\infty}| \leq  C {\rho}^{j(1+ \alpha)} \quad \mbox{and} \quad |b_j  - b_{\infty}|\leq C {\rho}^{j\alpha}.
\]
Fix $0< \rho \ll 1$, and let $k \in \mathbb{N}$ be such that $\rho^{k+1} < r < \rho^k$. We estimate from the previous computations
\[
\begin{array}{ccl}
\displaystyle\sup_{B_r} |u(x)- {\ell}_{\infty}(x)| & \leq & \displaystyle\sup_{B_{\rho^k}} |u(x) -{\ell}_k(x)| + \sup_{B_{\rho^k}} | {\ell}_k(x) - {\ell}_{\infty}(x)|\vspace{0.1cm}\\
                             & \leq & {\rho}^{k (1 + \alpha)} + |a_k - a_{\infty}| + {\rho}^k|b_k - b_{\infty}|\vspace{0.1cm}\\
                              & \leq & {\rho}^{k (1 + \alpha)} + C {\rho}^{k (1 + \alpha)} +  C {\rho}^k \cdot {\rho}^{k\alpha}\vspace{0.1cm}\\
                             & \leq & C \left(\frac{1}{\rho}\right)^{(1 + \alpha)} {\rho}^{(k+1)(1 + \alpha)} \vspace{0.1cm}\\
                              &\leq & C r^{(1 + \alpha)}. \\
\end{array}
\]

To conclude the proof, we characterize the coefficients $a_{\infty}$ and $b_{\infty}$. In fact, by taking the limit in the inequality
\begin{equation}
\sup_{B_{{\rho}^n}}|u(x) - {\ell}_n(x)| \leq C \rho^{n(1+ \alpha)},
\end{equation}
evaluated at $0$, we obtain that $a_{\infty} = u(0,0)$. We can also conclude that $b_{\infty}= Du(0,0)$ see for instance \cite{Bronzi-Pimentel-Rampasso-Teixeira2020}. We have proved that approximated viscosity solutions are of class $C^{1,\alpha}_{loc}$, and that they exist by Proposition \ref{prop_exis2}. Since approximated viscosity solutions are also viscosity solutions (Proposition \ref{equiv_def}), we have the existence of a $C^{1, \alpha}_{loc}(B_1)$ viscosity solution of \eqref{main_eq0}. 
\end{proof}

\subsection{$C^{1,\alpha}$ regularity via a smallness condition on $2-\sigma$}

In this subsection, we give a proof of Theorem \ref{Degenerate Theorem}. We start with an approximation lemma, which allows us to relate our model with a second order local equation.

\begin{Lemma}\label{approx_lemma}
Let $u \in C(\overline{B}_1)$ be a normalized viscosity solution to \eqref{eq_scaled}. Suppose that A\ref{assump_degree}-A\ref{assump_condition_on_kernels} are in force. Given $\mathfrak{M}, \delta>0$ and $\alpha \in (0, 1)$, there exists $\varepsilon>0$ such that if 
\begin{equation}\label{eq0_lemma}
|u(x)|\leq \mathfrak{M}\left(1 + |x|^{1 +\alpha}\right) \quad \text{for\;all}\; \; x \in \mathbb{R}^d
\end{equation}
and 
\[
|\sigma - 2| + \| f\|_{L^{\infty}(B_1)} \leq \varepsilon,
\]
then we can find a function $h \in {\mathcal C}_{loc}^{1, \bar{\alpha}}(B_1)$ with $\bar{\alpha} \in (0, 1)$ satisfying 
\[
\|u - h\|_{L^{\infty}(B_{3/4})} \leq \delta.
\]
\end{Lemma}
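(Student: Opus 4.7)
The plan is a compactness--contradiction argument, in the spirit of \cite{Prazeres-Topp2021}, adapted to the present nonhomogeneous degeneracy. Assume the conclusion fails. Then there exist $\delta_0, \mathfrak{M}_0 > 0$, $\alpha_0 \in (0,1)$, and sequences $(u_j)$, $(\xi_j) \subset \mathbb{R}^d$, $(\sigma_j) \subset (1,2)$, and $(f_j)$, with $u_j$ a viscosity solution of
\[
-\bigl(|Du_j+\xi_j|^p + a(x)|Du_j+\xi_j|^q\bigr)\mathcal{I}_{\sigma_j}(u_j,x) = f_j \text{ in } B_1,
\]
satisfying $|u_j(x)| \leq \mathfrak{M}_0(1+|x|^{1+\alpha_0})$ and $|\sigma_j - 2| + \|f_j\|_{L^\infty(B_1)} \leq 1/j$, yet $\|u_j - h\|_{L^\infty(B_{3/4})} \geq \delta_0$ for every $h \in C^{1,\bar\alpha}_{loc}(B_1)$.

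First I would apply Proposition \ref{Lip_reg} to obtain a local Lipschitz estimate on $(u_j)$ that is uniform in both $j$ and $\xi_j$; combined with the growth assumption \eqref{eq0_lemma}, this also yields a uniform bound on $\|u_j\|_{L^1_{\sigma_j}}$. By Arzel\`a--Ascoli, after passing to a subsequence $u_j$ converges locally uniformly in $B_1$ to some $u_\infty \in C^{0,1}_{loc}(B_1)$.

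Next I would identify the limit equation, splitting according to the behaviour of $(\xi_j)$. If $(\xi_j)$ is bounded, extract a further subsequence $\xi_j \to \xi_\infty$. Assumption A\ref{assump_condition_on_kernels} guarantees that the rescaled kernels $K^{\sigma_j}_{ij}(x)|x|^{d+\sigma_j}$ converge uniformly to constants $k_{ij} \in (\lambda,\Lambda)$, so, by the stability machinery of \cite[Lemma 5]{Caffarelli-Silvestre2011} in the limit $\sigma_j \to 2$, $\mathcal{I}_{\sigma_j}$ converges (in the viscosity sense) to a local uniformly elliptic second-order operator $F$. Hence $u_\infty$ is a viscosity solution of
\[
-\bigl(|Du_\infty + \xi_\infty|^p + a(x)|Du_\infty + \xi_\infty|^q\bigr)F(D^2 u_\infty) = 0 \text{ in } B_{9/10},
\]
whose $C^{1,\bar\alpha}_{loc}$-regularity follows from the local theory for degenerate fully nonlinear equations with nonhomogeneous degeneracy; see \cite{DeFilippis2021} (or \cite{Imbert-Silvestre2013} when $a\equiv 0$). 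If instead $|\xi_j| \to \infty$, the uniform Lipschitz bound forces $|Du_j + \xi_j|^p \to \infty$ locally, so dividing through by the gradient factor yields
\[
-\mathcal{I}_{\sigma_j}(u_j,x) = \frac{f_j(x)}{|Du_j+\xi_j|^p + a(x)|Du_j+\xi_j|^q} \longrightarrow 0
\]
uniformly on compact subsets of $B_1$; the same stability argument then gives $F(D^2 u_\infty) = 0$, so $u_\infty \in C^{1,\bar\alpha}_{loc}$ by the Caffarelli--Silvestre theory \cite{Caffarelli-Silvestre2009}.

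Choosing $h := u_\infty$ contradicts $\|u_j - h\|_{L^\infty(B_{3/4})} \geq \delta_0$ for $j$ sufficiently large, closing the argument. The main difficulty is the joint passage to the limit $\sigma_j \to 2$ with possibly unbounded $\xi_j$: assumption A\ref{assump_condition_on_kernels} is exactly what allows the nonlocal operators to converge to a genuine local uniformly elliptic operator, and the bounded/unbounded dichotomy for $\xi_j$ is needed to prevent the nonhomogeneous gradient factor from obstructing the identification of the limit PDE. Once both issues are handled, the $C^{1,\bar\alpha}$ regularity of the limit is supplied by well-established local theory.
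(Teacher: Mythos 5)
Your proposal follows the same compactness--contradiction skeleton as the paper: uniform Lipschitz bounds from Proposition \ref{Lip_reg}, local uniform convergence $u_j\to u_\infty$, passage to a local uniformly elliptic limit operator $F_\infty$ as $\sigma_j\to 2$ via A\ref{assump_condition_on_kernels}, the bounded/unbounded dichotomy for $(\xi_j)$, and finally $h:=u_\infty$. The genuine divergence is at the core step. When $(\xi_j)$ is bounded you keep the degenerate factor in the limit, assert that $u_\infty$ solves $-\left(|Du_\infty+\xi_\infty|^p+a(x)|Du_\infty+\xi_\infty|^q\right)F_\infty(D^2u_\infty)=0$, and outsource its regularity to the local theory of \cite{DeFilippis2021}. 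The paper instead proves the stronger fact that $u_\infty$ is $F_\infty$-harmonic, i.e.\ $F_\infty(D^2u_\infty)=0$: the delicate case is a test function whose gradient $b$ satisfies $|\xi_\infty+b|=0$, which is handled by perturbing the test polynomial with $k|\Pi_{\Gamma}(x)|$, where $\Gamma$ is spanned by the eigenvectors of $M$ with nonnegative eigenvalues, followed by a case analysis on $\Pi_{\Gamma}(\tilde x_\infty)$ and $M\tilde x_\infty$. That argument is where essentially all the work of the paper's proof lies, and it is what makes $\bar\alpha$ the exponent of $F_\infty$-harmonic functions with a universal $C^{1,\bar\alpha}$ bound for $h$ --- precisely what Proposition \ref{Deg_Step1} consumes, since the constant $C_0$ and the exponent there must be universal.

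Your shortcut can be made rigorous, but only with caveats you should make explicit. First, the identification of the degenerate limit equation is indeed the easy part (at a touching point with $|\xi_\infty+b|=0$ the limiting viscosity inequality is trivially satisfied), but the ``division by the gradient factor'' --- both there and in the unbounded-$\xi_j$ branch --- must be performed at the level of test functions, as the paper does, not on the equation itself. Second, \cite{DeFilippis2021} is stated for the unshifted equation with right-hand side $f$; you can reduce to it by setting $v:=u_\infty+\xi_\infty\cdot x$, but then the $C^{1,\gamma}$ bound degrades with $|\xi_\infty|$, so either you invoke interior estimates for the shifted equation that are uniform in $\xi$, or you lose the universal constant needed in the subsequent iteration. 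Third, the exponent you obtain this way is the one of the degenerate local theory, in general smaller than the $F$-harmonic exponent $\bar\alpha$, so the admissible range of $\alpha$ in Theorem \ref{Degenerate Theorem} would shrink accordingly. For the lemma exactly as stated (existence of some $h\in {\mathcal C}^{1,\bar\alpha}_{loc}$, with no quantitative estimate) your argument does close the contradiction; for the way the lemma is used downstream, the paper's route is what delivers the stated exponent and the uniform constants.
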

\begin{proof}
We argue by contradiction. That is, there are $\mathfrak{M}_0, \delta_0 >0, \alpha_0 \in (0, 1)$ and sequences $(\xi_j)_{j \in \mathbb{N}}$, $(a_j)_{j \in \mathbb{N}}$,  $(u_j)_{j \in \mathbb{N}}$,  $(f_j)_{j \in \mathbb{N}}$ and  $({\sigma}_j)_{j \in \mathbb{N}}$ such that
\begin{equation}\label{eq_approx}
-\left(|Du_j + \xi_j|^p + a_j(x)|Du_j + \xi_j|^q\right) {\mathcal I}_{\sigma_j}(u_j, x)= f_j(x) \quad \text{in} \quad B_1,
\end{equation}
\[
|u_j(x)|\leq \mathfrak{M}_0\left(1 + |x|^{1 +\alpha_0}\right),
\]
and 
\[
|\sigma_j - 2| + \| f_j\|_{L^{\infty}(B_1)} \leq 1/j,
\]
however
\begin{equation}\label{contradiction_argument}
\|u_j - h\|_{L^{\infty}(B_{3/4})} \geq \delta_0
\end{equation}
for all $h \in {\mathcal C}_{loc}^{1, \bar{\alpha}}(B_1)$.

Since $\sigma_j \rightarrow 2$, thanks to $A\ref{assump_condition_on_kernels}$ we have $\mathcal{I}_{\sigma_j} \rightarrow F_{\infty}$, where $F_{\infty}$ is a uniformly elliptic operator. In addition,  from Proposition \ref{Lip_reg} we can guarantee the existence of a function $u_\infty$ such that $u_j \rightarrow  u_{\infty}$ in $B_{3/4}$. We shall prove that $u_{\infty}$ solves 
\begin{equation}\label{eq_05}
F_{\infty}(D^2 u_{\infty})=0,
\end{equation}
in the viscosity sense. 

We first show that $u_{\infty}$ is a viscosity supersolution. Let $\varphi \in \mathcal{C}^2(\mathbb{R}^d)$ be a test function touching $u_{\infty}$ strictly from below at $\tilde{x} \in B_1$, \emph{ i.e.}, $\varphi(\tilde{x}) = u_{\infty}(\tilde{x})$ and $\varphi(x)< u_{\infty}(x)$ in a small neighborhood of $\tilde{x}$. For simplicity, we can take $\tilde{x} =0$ so that $\varphi(0)=u_{\infty}(0)=0$.

We can assume that $\varphi$ is given by the quadratic polynomial
\[
\varphi(x):=\frac{1}{2}M x\cdot x + b\cdot x.
\]
Since $u_j \rightarrow u_{\infty}$ uniformly in $B_{3/4}$, we may find a point $x_j$ and a quadratic polynomial of the form 
\[
\varphi_j(x):=\dfrac{1}{2} M(x - x_j)\cdot(x-x_j)+ b\cdot(x-x_j)+ u_j(x_j)
\]
such that $\varphi_j$ touches $u_j$ from below at $x_j$ in a small neighborhood of the origin. Since $u_j$ solves \eqref{eq_approx}, we obtain that
\[
-\left(|b+\xi_j|^p +a_j(x_j)|b+\xi_j|^q\right) \mathcal{I}_{\delta}(u_j, \varphi_j, x_j) \geq f_j(x_j).
\]

Next, we investigate two cases. We start by considering the case in which the sequence $(\xi_j)_{j \in {\mathbb N}}$ is unbounded. In this case, we can find a subsequence $\xi_j$ such that $|\xi_j| \to \infty$ as $j\to\infty$. Let $j \in \mathbb{N}$ be sufficiently large such that $|{\xi}_j|\geq \max\left \{1, 2|b|\right\}$. First, notice that
\[
\frac{1}{|b+ \xi_j|^p}\leq \frac{2^p}{|\xi_j|^p}  \longrightarrow 0 \quad \text{as} \quad j\rightarrow\infty,
\] 
which implies,
\[
\left|\dfrac{f_j(x_j)}{|b+\xi_j|^p +a_j(x)|b+\xi_j|^q}\right|\leq \dfrac{2^p}{j|\xi_j|^p} \longrightarrow 0 \quad \text{as} \quad j\rightarrow\infty.
\]
Thus, by passing the limit in \eqref{eq_approx}, we conclude that
\[
- F_{\infty}(M) = - \lim_{j \to\infty} \mathcal{I}_{\delta}(u_j, \varphi_j, x_j)  \geq -\lim_{j\to\infty} \dfrac{2^p}{j|\xi_j|^p} =0.
\]

On the other hand, if the sequence $(\xi_j)_{j \in \mathbb{N}}$ is bounded, then up
to a subsequence, $\xi_j$ converges to $\xi_{\infty}$. Hence $\xi_j + b$ converges to $\xi_{\infty} +b$ as $j$ goes to infinity. Observe that, if $|\xi_{\infty} +b|>0$, then $|\xi_j +b|\geq \frac{1}{2}|\xi_{\infty} +b|>0$ for $j$ sufficiently large. Hence, we obtain
\[
- F_{\infty}(M) = - \lim_{j \rightarrow\infty} \mathcal{I}_{\delta}(u_j, \varphi_j, x_j) \geq -\lim_{j \rightarrow\infty} \dfrac{2^p}{j|\xi_{\infty} +b|^p} =0.
\]

In the sequel, we analyze the case $|\xi_{\infty} + b|=0$. Suppose by contradiction that 
\begin{equation}\label{eq_02}
-F_{\infty}(M)<0.
\end{equation}
Since $F_\infty$ is uniformly elliptic, we have that $M$ has at least one positive eigenvalue. Define $\Gamma$ as the direct sum of all the eigensubspaces generated by the nonnegative eigenvalues of $M$ and let $\Pi_{\Gamma}$ be the orthogonal projection over $\Gamma$. Since $|\xi_{\infty} + b|=0$, either $\xi_{\infty}= -b$ with $|\xi_{\infty}|, |b|>0$ or $|\xi_{\infty}|=|b|=0$.

We first analyze the case $\xi_{\infty}= -b$ with $|\xi_{\infty}|, |b|>0$. Recall that $\varphi$ touches $u_{\infty}$ at $0$ from below, then for $k>0$ sufficiently small, the function
\[
\phi_{k}(x):= \dfrac{1}{2}Mx\cdot x + b\cdot x + k|\Pi_{\Gamma}(x)|
\]
touches $u_j$ from below at $\tilde{x}_j$ in a neighborhood of the origin. Since  the sequence $\{\tilde{x}_j\}$ is bounded, up to a subsequence, we obtain that $\tilde{x}_j\rightarrow \tilde{x}_{\infty}$ for some $ \tilde{x}_{\infty} \in B_1$. Now, we study two cases: $\Pi_{\Gamma}(\tilde{x}_{\infty})=0$ and $\Pi_{\Gamma}(\tilde{x}_{\infty})\not=0$. In the case $\Pi_{\Gamma}(\tilde{x}_{\infty})=0$, we have
\[
 |\Pi_{\Gamma}(\tilde{x}_{\infty})|= \max_{e \in \mathbb{S}^{d-1}} e \cdot \Pi_{\Gamma}(\tilde{x}_{\infty})=\min_{e \in \mathbb{S}^{d-1}} e \cdot \Pi_{\Gamma}(\tilde{x}_{\infty}),
\]
consequently, we can rewrite the function $\phi_{k}$ as 
\[
\phi_{k}(x):= \dfrac{1}{2}Mx\cdot x + b\cdot x + k e\cdot\Pi_{\Gamma}(x) \quad \text{for \; all} \quad e \in  \mathbb{S}^{d-1}.
\]
Notice that 
\[
D\left(e\cdot\Pi_{\Gamma}(x)\right) = \Pi_{\Gamma}(e), \;\mbox{ and }\; D^2(e \cdot \Pi_{\Gamma}(x)) =0. 
\]
In addition,
\begin{equation}\label{eq_03}
e \in \mathbb{S}^{d-1}\cap \Gamma \Rightarrow \Pi_{\Gamma}(e) =e \quad \text{and} \quad e \in \mathbb{S}^{d-1}\cap \Gamma^{\bot} \Rightarrow \Pi_{\Gamma}(e) =0.
\end{equation}
If $|M\tilde{x}_{\infty}| =0$, then $|M\tilde{x}_j|+ |b +\xi_j| \leq k/2$ for every $j$ large enough, so that $|M\tilde{x}_j + b +\xi_j + k e|\geq k/2$. Therefore
{
\small
\[
\begin{array}{ccl}
-F_{\infty}(M) &=& -\lim_{j \rightarrow \infty}\mathcal{I}_{\delta}(u_j, \varphi_j, \tilde{x}_j)\vspace{0.3cm}\\
&\geq& -\lim_{j \rightarrow \infty} \left|\dfrac{f_j(\tilde{x}_j)}{|M\tilde{x}_j + b + \xi_j +k e|^p + a_j(\tilde{x}_j)|M\tilde{x}_j + b + \xi_j +k e|^q} \right|\vspace{0.3cm}\\ 
&\geq& - \lim_{j \rightarrow \infty} \dfrac{2^p}{j k^p}
\vspace{0.3cm}\\ 
&=&0. \\ 
\end{array}
\]
}
This implies that $-F_{\infty}(M)\geq 0$, which is a contradiction with \eqref{eq_02}.

Now suppose that $|M\tilde{x}_{\infty}|>0$ and $\Gamma \equiv \mathbb{R}^d$. By using \eqref{eq_03}, we take $e \in \mathbb{S}^{d-1}$ such that 
\[
|M\tilde{x}_{\infty} +k \Pi_{\Gamma}(e)| =|M\tilde{x}_{\infty} +k e|>0.
\] 
Hence, for all $j$ sufficiently large, we get 
\[
|M\tilde{x}_j +k e|\geq \frac{1}{2}|M\tilde{x}_{\infty} +k e|>0 \;\mbox{and} \; |b + \xi_j| \leq \frac{1}{4}|M \tilde{x}_{\infty} +k e|.
\]
If $\Gamma \not\equiv \mathbb{R}^d$, then we can find $e \in \mathbb{S}^{d-1}\cap \Gamma^{\bot}$ for which by \eqref{eq_03} it holds
\[
|M\tilde{x}_{\infty} +k \Pi_{\Gamma}(e)| =|Mx_{\infty}|>0.
\]
Thus,  for all $j$ large enough, we have 
\[
|M\tilde{x}_j|\geq \frac{1}{2}|M\tilde{x}_{\infty}| \;\mbox{and} \; |b + \xi_j| \leq \frac{1}{4}|M \tilde{x}_{\infty}|.
\]
In both cases, we obtain that
\[
|M\tilde{x}_j  +b +\xi_j + k \Pi_{\Gamma}(e)| \geq \dfrac{1}{4}\left|M\tilde{x}_{\infty} + k\Pi_{\Gamma}(e)\right|>0.
\]
Therefore,
{\footnotesize
\[
\begin{array}{ccl}
-F_{\infty}(M)&=&- \lim_{j \rightarrow \infty} \mathcal{I}_{\delta}(u_j, \varphi_j, \tilde{x}_j)\vspace{0.3cm}\\
&\geq& - \lim_{j \rightarrow \infty} \left|\dfrac{f_j(\tilde{x}_j)}{\left|M\tilde{x}_j +b + \xi_j +k\Pi_{\Gamma}(e) \right|^p + a_j(\tilde{x}_j)\left|M\tilde{x}_j +b + \xi_j +k\Pi_{\Gamma}(e) \right|^q}\right|\vspace{0.3cm}\\
&\geq& - \lim_{j \rightarrow \infty} \dfrac{2^{2p}}{j |M\tilde{x}_{\infty} +k\Pi_{\Gamma}(e)|}\vspace{0.3cm}\\
&=& 0,\\ 
\end{array}
\]
}
which is again a contradiction with \eqref{eq_02}. 

Finally, let us consider the case $|\Pi_{\Gamma}(\tilde{x}_{\infty})|>0$. As before, for $j$ sufficiently large, we get that $\Pi_{\Gamma}(\tilde{x}_j)>0$. It follows that the application $x \longrightarrow |\Pi_{\Gamma}(x)|$ is smooth and convex in a neighborhood of $\tilde{x}_j$. Since $\Pi_{\Gamma}$ is a projection, we have 
\begin{equation}\label{eq_04}
\Pi_{\Gamma}(x)D(\Pi_{\Gamma}(x))= \Pi_{\Gamma}(x) \;\mbox{ and } \;D^2\left(\Pi_{\Gamma}(x)\right)\geq0. 
\end{equation}
Hence,
{\small
\[
\mathcal{I}_{\delta}(u_j, {\phi}_{k}, \tilde{x}_j)\geq \dfrac{f_j(\tilde{x}_j)}{\left|M \tilde{x}_j + b+\xi_j +k\frac{\Pi_{\Gamma}(\tilde{x}_j)}{|\Pi_{\Gamma}(\tilde{x}_j)|} \right|^p + a_j(\tilde{x}_j)\left|M\tilde{x}_j + b +\xi_j +k \frac{\Pi_{\Gamma}(\tilde{x}_j)}{|\Pi_{\Gamma}(\tilde{x}_j)|} \right|^q }.
\]
}
We continue as before with $e=e_j:= \frac{\Pi_{\Gamma}(\tilde{x}_j)}{|\Pi_{\Gamma}(\tilde{x}_j)|}$, that is, studying the cases $|M\tilde{x}_{\infty}| =0$ and $|M\tilde{x}_{\infty}| >0$, to obtain respectively
 \[\mathcal{I}_{\delta}(u_j, {\phi}_{k}, \tilde{x}_j)\geq-\frac{2^p}{j k^p}\quad \text{and} \quad \mathcal{I}_{\delta}(u_j, {\phi}_{k}, \tilde{x}_j)\geq-\frac{2^{2p}}{j|Mx_{\infty} + k\Pi_{\Gamma}(e)|^p}.
\]
Therefore, we can conclude $-F_{\infty}(M + D^2\left(\Pi_{\Gamma}(\tilde{x}_{\infty})\right))\geq 0$, which implies by \eqref{eq_04} that $- F_{\infty}(M)\geq0$. Hence, we reach  a contradiction with \eqref{eq_02}. 

To complete the proof, we need to analyze the case $|\xi_{\infty}|=|b|=0$. We proceed as before, using now the function 
\[
\phi_{k}(x):=\dfrac{1}{2}Mx\cdot x +k|\Pi_{\Gamma}(x)|,
\]
that touches $u_j$ from below at $\tilde{x}_j$ in a neighborhood at the origin, arriving in a contradiction. 

Therefore, we conclude that $u_{\infty}$ is a viscosity supersolution to \eqref{eq_05}.
Similarly, we can prove that $u_{\infty}$ is also a subsolution to \eqref{eq_05}, which implies that $u_{\infty}$ is a solution. Since the solution to \eqref{eq_05} are locally of class $C^{1,\bar{\alpha}}$, we can take $h = u_{\infty}$, and we reach a contradiction with \eqref{contradiction_argument}. This concludes the proof.
\end{proof}

Now we proceed with the proof of Theorem \ref{Degenerate Theorem}. 
 
\begin{proof}[Proof of the Theorem \ref{Degenerate Theorem}]
Once we have the Lemma \ref{approx_lemma}, we can prove the same results as in Proposition \ref{Deg_Step1} and Propostition \ref{Deg_step2} with the same hypotheses of Lemma \ref{approx_lemma} and then the proof of Theorem \ref{Degenerate Theorem} follows exactly as in the proof of Theorem  \ref{Degenerate Theorem at least one solution}.
\end{proof}

\section{Analysis of the singular case}

In this section, we deal with the singular case $-1 < p \leq q <0$.  For the sake of clarity, we split the section into two subsections: in the first one, we prove  the existence of approximated viscosity solutions and in the second one, we establish the regularity results.

\subsection{Existence of approximated viscosity solutions}

For each $j \in \mathbb{N}$, we consider the nonlocal uniformly elliptic equation
\begin{equation}\label{eq_reg}
-\left((|Du_j|+c_j)^p+a(x)(|Du_j|+c_j)^q\right){\cal I}_{\sigma}(u_j,x) = f(x) \;\;\mbox{ in }\;\;B_1,
\end{equation}
where $(c_j)_{j \in \mathbb{N}}$ is a sequence of positive real numbers such that $c_j \rightarrow 0$ and $c_j \leq 1$ for every $j$. We prove that the sequence $(u_j)_{j \in \mathbb{N}}$ of viscosity solutions for \eqref{eq_reg}, with boundary data in $ L^{1}_{\sigma}(\mathbb{R}^d)$, converge to a function $u_\infty\in C(\overline{B}_1)\cap L^{1}_{\sigma}(\mathbb{R}^d)$ which is an approximated viscosity solution to equation \eqref{main_eq0}. 

We start with a compactness result, independent of $j$, for the solutions of \eqref{eq_reg}. 

\begin{Lemma}\label{lip_reg_sing}
Let  $u_j \in C(\overline{B}_1)$ be a viscosity solution to \eqref{eq_reg}. Suppose that A\ref{assump_degree}-A\ref{assump_regularf} are in force. Then $u_j$ is locally Lipschitz continuous,\emph{ i. e.,}
\begin{equation*}
|u_j(x) - u_j(y)| \leq M|x - y| \quad  \text{for all}\: \: x, y  \in B_{1/2},
\end{equation*}
where $M>0$ does not depend on j.
\end{Lemma}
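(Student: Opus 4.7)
The plan is to mirror the Ishii--Lions doubling-variables argument used in the proof of Lemma \ref{Lipschitz_Regularity2}, adapting it to the singular regime $-1 < p \le q < 0$. I would introduce the same nonnegative cut-off $\psi$ vanishing in $B_{1/2}$, the concave modulus $\varphi(t) = t - t^{1+\alpha}$ on $[0,t_0]$, and form
\[
\Phi(x,y) = u_j(x) - u_j(y) - L\varphi(|x-y|) - \psi(y)
\]
on $\overline{B}_1 \times \overline{B}_1$, aiming to show $\Phi \le 0$ for $L$ large independent of $j$. Assuming for contradiction that $\Phi(\bar x,\bar y) > 0$ at its maximum, the standard reasoning places $\bar x, \bar y \in B_{7/8}$ with $\bar x \ne \bar y$, and for $L$ chosen sufficiently large relative to $\osc_{B_1} u_j$ the gradients $\xi_{\bar x} = D_x\phi(\bar x,\bar y)$ and $\xi_{\bar y} = -D_y\phi(\bar x,\bar y)$ of the test function at the maximum point have magnitude of order $L$.

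The crucial structural change from the degenerate setting is the sign of $p$. In Lemma \ref{Lipschitz_Regularity2}, the bound $|\xi_{\bar x}|+c_j \ge 1/2$ produced $(|\xi_{\bar x}|+c_j)^p \ge (1/2)^p$ and hence $-\mathcal{I}_\delta \le C\|f\|_{L^\infty(B_1)}$. Here, since $p<0$, I would instead exploit the upper bound $|\xi_{\bar x}|+c_j \le C L$, valid because $c_j \le 1$ and $|\varphi'| \le 1$, to obtain $(|\xi_{\bar x}|+c_j)^p \ge (CL)^p$. Because $a \ge 0$ the $q$-term only increases the bracket, so the same lower bound persists for the full expression $(|\xi_{\bar x}|+c_j)^p + a(\bar x)(|\xi_{\bar x}|+c_j)^q$. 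Inverting the sub/supersolution viscosity inequalities at $\bar x$ and $\bar y$, I would obtain
\[
-\mathcal{I}_\delta(u_j,\Phi_{\bar y},\bar x) \le C\|f\|_{L^\infty(B_1)} L^{|p|}, \qquad -\mathcal{I}_\delta(u_j,-\Phi_{\bar x},\bar y) \ge -C\|f\|_{L^\infty(B_1)} L^{|p|},
\]
with $C$ independent of $j$.

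From here the argument proceeds exactly as in Lemma \ref{Lipschitz_Regularity2}: combining these two inequalities with the nonlocal test function computations of \cite{Barles-Chasseigne-Ciomaga-Imbert-12, Barles-Chasseigne-Imbert-11}, restricting integration to the cone $\mathcal{C}$ aligned with $\bar x - \bar y$, and applying \cite[Corollary~9]{Barles-Chasseigne-Ciomaga-Imbert-12} to bound $I_K[\mathcal{C}](\varphi,e) \le -cL|\bar x-\bar y|^{-\theta}$, I arrive at the key inequality
\[
-C_1\bigl(\|f\|_{L^\infty(B_1)} L^{|p|} + \osc_{B_1} u_j + \|u_j\|_{L^1_\sigma} + 1\bigr) \le -c_2 L \,|\bar x-\bar y|^{-\theta}
\]
for some $\theta > 0$ arising from a small choice of $\alpha$. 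The estimate $L\varphi(|\bar x-\bar y|) \le \osc_{B_1} u_j$ converts this into a scalar inequality whose right-hand side grows like $L^{1+\theta}$.

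The main obstacle, and the reason the hypothesis $p>-1$ is essential, is matching these two powers of $L$: the contradiction requires $1+\theta > |p|$, which is precisely guaranteed by the assumption $|p|<1$. Choosing
\[
L = C_3\bigl(\|f\|_{L^\infty(B_1)} + \osc_{B_1} u_j + \|u_j\|_{L^1_\sigma} + 1\bigr)
\]
with $C_3 = C_3(d,\lambda,\Lambda,p)$ large enough then contradicts the assumption $\Phi(\bar x,\bar y)>0$ and yields the claimed Lipschitz estimate. Uniformity in $j$ comes for free, since the only property of $c_j$ used is $c_j \le 1$, and neither $c_j$ nor $j$ enters any of the constants.
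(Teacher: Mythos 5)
Your proposal takes essentially the same route as the paper's proof of Lemma \ref{lip_reg_sing}: the identical Ishii--Lions doubling argument of Lemma \ref{Lipschitz_Regularity2}, with the singular exponent handled through the upper bound $|\xi_{\bar x}| \le L(2+\alpha)$ (so that $(|\xi_{\bar x}|+c_j)^p \ge (L(2+\alpha)+1)^{p}$ since $c_j\le 1$), and then the conclusion imported verbatim from the degenerate case. If anything you are more explicit than the paper, which absorbs the factor $(L(2+\alpha)+1)^{|p|}$ into a ``universal'' constant, whereas you track the $L^{|p|}$ growth against the $L^{1+\theta}$ term and correctly identify $|p|<1$, i.e.\ $p>-1$, as the reason the final choice of $L$ still produces the contradiction uniformly in $j$.
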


\begin{proof} 
The proof follows the same lines as in Lemma \ref{Lipschitz_Regularity2}, hence we will only provide a few details. Consider $\varphi$ and $\psi$ as in Lemma \ref{Lipschitz_Regularity2}. As before, we define $\phi, \Phi:\mathbb{R}^d \times \mathbb{R}^d \rightarrow \mathbb{R}$ as
\[
\phi(x, y):= L\varphi(|x- y|) + \psi(y)
\]
and 
\[
\Phi(x, y) := u_j(x) - u_j(y) - \phi(x, y).
\]
Since $\Phi$ is a continuous function, we have $\Phi$ attains its maximum in $\overline{B}_1 \times \overline{B}_1$ at $(\bar{x}, \bar{y})$. We argue through a contradiction argument, suppose that 
\[
\Phi(\bar{x}, \bar{y}) > 0,
\]
and with the same notation as Lemma \ref{Lipschitz_Regularity2}, we obtain 
\[
- \left((|{\xi}_{\bar{x}}| + c_j)^p + a(\bar{x})(|{\xi}_{\bar{x}}|+c_j)^q \right){\mathcal I}_{\delta}(u_j, {\Phi}_{\bar{y}}, \bar{x})\leq f(\bar{x}).
\]
Notice that
\[
|{\xi}_{\bar{x}}| \leq L(2+\alpha),
\]
which yields to
\[
\begin{array}{ccl}
- {\mathcal I}_{\delta}(u_j, {\Phi}_{\bar{y}}, \bar{x})
&\leq & \dfrac{\|f\|_{L^{\infty}(B_1)}}{(|{\xi}_{\bar{x}}| + c_j)^p + a(\bar{x})(|{\xi}_{\bar{x}}|+c_j)^q} \vspace{0.3cm} \\
&\leq&\dfrac{\|f\|_{L^{\infty}(B_1)}}{(|{\xi}_{\bar{x}}| + c_j)^p} \vspace{0.3cm} \\
&\leq& (L(2+\alpha) + 1)^{-p}\|f\|_{L^{\infty}(B_1)}\vspace{0.3cm} \\
&\leq& C_1\|f\|_{L^{\infty}(B_1)}\vspace{0.3cm}.
\end{array}
\]
An analogous reasoning yields $-\mathcal{I}_{\delta}(u_j, - \Phi_{\bar{x}}, \bar{y}) \geq -C_2\|f\|_{L^{\infty}(B_1)}$, where $C_1$ and $C_2$ are universal constants and consequently
\[
{\mathcal I}_{\delta}({u_j, \Phi}_{\bar{y}}, \bar{x}) - \mathcal{I}_{\delta}(u_j, - \Phi_{\bar{x}}, \bar{y})\geq -  2\bar{C}\|f\|_{L^{\infty}(B_1)},
\]
where $\bar{C}$ is a universal constant.

From here, we can proceed exactly as in Proposition \ref{Lipschitz_Regularity2} to finish the proof.
\end{proof}

In the next result, we prove existence of an approximated viscosity solutions for the equation \eqref{main_eq0}.

\begin{Proposition}\label{prop_exis}
Suppose that A\ref{assump_degree}-A\ref{assump_regularf} hold true. Then, there exists at least one approximated viscosity solution $u\in C(B_1)$ to \eqref{main_eq0}.
\end{Proposition}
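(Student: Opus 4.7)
The plan is to mirror the strategy used for the degenerate case in Proposition \ref{prop_exis2}: regularize the singular equation by replacing $|Du|$ with $|Du|+c_j$, solve the resulting uniformly elliptic nonlocal problem with a suitable exterior datum, and then pass to the limit using the uniform Lipschitz estimate from Lemma \ref{lip_reg_sing}.

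Concretely, I would fix $\alpha\in(0,\sigma-1)$ and a function $g\in L^1_\sigma(\mathbb{R}^d)$ with $|g(x)|\le 1+|x|^{1+\alpha}$ for every $x\in\mathbb{R}^d$, and consider, for each $j\in\mathbb{N}$, the Dirichlet problem
\[
\begin{cases}
-\bigl((|Du_j|+c_j)^p+a(x)(|Du_j|+c_j)^q\bigr){\cal I}_\sigma(u_j,x)=f(x) & \text{in }B_1,\\
u_j=g & \text{in }\mathbb{R}^d\setminus B_1,
\end{cases}
\]
for a sequence of positive constants $c_j\downarrow 0$ with $c_j\le 1$. Since $-1<p\le q<0$, for every fixed $j$ the coefficient $(|Du_j|+c_j)^p+a(x)(|Du_j|+c_j)^q$ stays bounded between two positive constants depending on $c_j$, so the equation is nonlocal uniformly elliptic with bounded right-hand side. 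Existence of a viscosity solution $u_j\in C(\overline{B}_1)\cap L^1_\sigma(\mathbb{R}^d)$ then follows from the Perron-type construction in \cite{Barles-Chasseigne-Imbert-2008}, exactly as in the proof of Proposition \ref{prop_exis2}.

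Next I would apply Lemma \ref{lip_reg_sing} to obtain a Lipschitz estimate for $u_j$ on $B_{1/2}$ which is independent of $j$, together with the a priori $L^\infty(B_1)$ bound that follows from comparing $u_j$ with suitable barriers built from $g$ and $\|f\|_{L^\infty(B_1)}$; by Ascoli--Arzel\`a we extract a subsequence (still denoted $u_j$) converging locally uniformly in $B_1$ to some $u_\infty\in C(\overline{B}_1)$. Because the boundary data $g$ are fixed and dominate $u_j$ outside $B_1$, the bound $|u_j(x)|\le C_1(1+|x|^{1+\alpha})$ required in Definition \ref{new_def} holds uniformly in $j$, so $u_\infty\in L^1_\sigma(\mathbb{R}^d)$ as well.

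Finally, I would verify that the triple $(u_j,\xi_j\equiv 0,c_j)$ realizes $u_\infty$ as an approximated viscosity solution in the sense of Definition \ref{new_def}. Here the singular regime is actually easier than the degenerate one: Definition \ref{new_def} only imposes the extra condition $jc_j^p\to\infty$ when $0<p\le q$, while for $-1<p\le q<0$ it suffices that $c_j\to 0$, which is automatic. The main technical subtlety I anticipate is ensuring the growth control $\|u_j\|_{L^1_\sigma}\le C_1(1+|x|^{1+\alpha})$ uniformly in $j$; this is handled by the explicit choice of $g$ above and by a comparison principle for the regularized equation, and thus does not require any fundamentally new argument beyond what is already used in Proposition \ref{prop_exis2}.
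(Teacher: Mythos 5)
Your proposal follows essentially the same route as the paper's proof: regularize with $(|Du_j|+c_j)^p+a(x)(|Du_j|+c_j)^q$ (the paper takes $c_j=1/j$), solve the exterior Dirichlet problem with datum $g\in L^1_\sigma(\mathbb{R}^d)$, $g\le 1+|x|^{1+\alpha}$, via \cite{Barles-Chasseigne-Imbert-2008}, use the $j$-independent Lipschitz bound of Lemma \ref{lip_reg_sing} to pass to a locally uniform limit $u_\infty$, and check Definition \ref{new_def} directly (noting, as you do, that $jc_j^p\to\infty$ is only required when $0<p\leq q$). The only caveat is your parenthetical claim that the coefficient is pinched between two positive constants depending on $c_j$, which for $p<0$ only gives the upper bound $c_j^p$ a priori; but this is the same level of justification the paper itself uses when invoking uniform ellipticity of the regularized problem, so it does not constitute a departure from, or a gap relative to, the paper's argument.
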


\begin{proof}
Consider the equation
\[
\left\{
\begin{array}{rcl}
\left[(|Du_j|+1/j)^p+a(x)(|Du_j|+1/j)^q\right]{\mathcal I}_{\sigma}(u_j,x) & = &f(x) \;\;\mbox{ in }\;\;B_1 \\
u_j(x) & = & g(x)\;\;\mbox{ in }\;\;\mathbb{R}^d\setminus B_1,
\end{array}
\right.
\]
where $g \in L^1_{\sigma}(\mathbb{R}^d)$  satisfies 
\[
g(x) \leq 1 + |x|^{1+\alpha}\;\;\mbox{ for all }\;\; x\in \mathbb{R}^d,
\]
where $1+\alpha \in(0,\sigma)$. Since the operator is nonlocal uniformly elliptic, the existence of a viscosity solution $u_j$ is assured by \cite{Barles-Chasseigne-Imbert-2008}. By Lemma \ref{lip_reg_sing}, we have that the solution $u_j \in C^{0,1}$, with estimates that are independent of $j$. Hence, there exists $u_\infty \in C_{loc}(B_1)$  such that $u_j \to u_{\infty}$ locally uniformly in $B_1$, and $u_{\infty} \equiv g$ outside $B_1$. By taking $c_j := 1/j$, we have directly by definition that $u_\infty$ is an approximated viscosity solution to \eqref{main_eq0}.
\end{proof}

\subsection{$C^{1,\alpha}$-regularity estimates} 

This subsection is devoted to the proof of Theorem \ref{Singular Theorem}. We start with a compactness result to approximated viscosity solutions of \eqref{main_eq0}. 

%
Notice that the operator in \eqref{eq_reg} is a nonlocal uniformly elliptic operator, hence for each $j \in \mathbb{N}$, we have  $u_j \in C^{1,\alpha}_{loc}(B_1)$. Keep in mind that the $C^{1,\alpha}$-norm of $u_j$ could degenerate as $j \to \infty$. In what follows, we will verify that this does not happen. 

\begin{Proposition}\label{prop_ur}
Let $u_j \in C(\overline{B}_1)$ be a viscosity solution to \eqref{eq_reg}. Suppose that A1 - A4 are in force. Then $u_j \in C^{1,\alpha}_{loc}(B_1)$ with $\alpha \in (0, 1)$. In addition, there exists a positive constant $C=C(\lambda, \Lambda, d,p)$ such that
\[
\|u_j\|_{C^{1,\alpha}(B_{1/2})} \leq C(\|u_j\|_{L^{\infty}(B_1)} + \|u_j\|_{L^1_ \sigma} +\|f\|_{L^{\infty}(B_1)}).
\]
\end{Proposition}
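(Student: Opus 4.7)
The strategy is to reduce the regularized equation \eqref{eq_reg} to a uniformly elliptic nonlocal equation with bounded right-hand side, so that the $C^{1,\alpha}$ theory of Caffarelli and Silvestre \cite{Caffarelli-Silvestre2009} applies and delivers estimates that are independent of $j$. The key structural observation in the singular regime $p\le q<0$ is that a uniform \emph{upper} bound on the gradient produces a uniform \emph{lower} bound on the coefficient
\[
\mathcal{H}_j(x,\xi) := (|\xi|+c_j)^{p}+a(x)(|\xi|+c_j)^{q},
\]
which is precisely what is needed to divide through in the viscosity inequality.

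First, Lemma \ref{lip_reg_sing} supplies a Lipschitz bound $|u_j(x)-u_j(y)|\leq M|x-y|$ for $x,y\in B_{1/2}$, with $M$ controlled by $\|u_j\|_{L^\infty(B_1)}$, $\|u_j\|_{L^{1}_{\sigma}}$ and $\|f\|_{L^\infty(B_1)}$, and independent of $j$. A standard comparison argument transfers this Lipschitz constant to admissible test functions: if $\varphi\in C^2$ touches $u_j$ from above (or below) at $x_0\in B_{1/2}$, then comparing the one-sided expansion of $\varphi$ at $x_0$ against the Lipschitz bound on $u_j$ forces $|D\varphi(x_0)|\leq M$. Since $p<0$, $c_j\leq 1$, and $a\geq 0$, this yields the uniform lower bound
\[
\mathcal{H}_j(x_0,D\varphi(x_0)) \;\geq\; (|D\varphi(x_0)|+c_j)^{p} \;\geq\; (M+1)^{p} \;=:\; \theta \;>\; 0,
\]
with $\theta$ independent of $j$.

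Dividing the viscosity sub- and supersolution inequalities for $u_j$ through by the positive quantity $\mathcal{H}_j(x_0,D\varphi(x_0))$, and using $|f|\leq \|f\|_{L^\infty(B_1)}$, I deduce that $u_j$ satisfies, in the viscosity sense,
\[
\mathcal{M}^{+}_{\sigma} u_j \;\geq\; -C_0, \qquad \mathcal{M}^{-}_{\sigma} u_j \;\leq\; C_0, \qquad C_0 := \theta^{-1}\|f\|_{L^\infty(B_1)},
\]
uniformly in $j$, where $\mathcal{M}^{\pm}_{\sigma}$ denote the Pucci extremal operators associated to $\mathcal{K}_0$. At this stage the $C^{1,\alpha}_{loc}$ estimate of Caffarelli and Silvestre \cite{Caffarelli-Silvestre2009} for functions trapped between the extremal operators of $\mathcal{K}_0$, together with the tail control encoded in the $L^{1}_{\sigma}$ norm of $u_j$, produces $u_j\in C^{1,\alpha}_{loc}(B_1)$ with the claimed estimate, for some $\alpha\in(0,1)$ and a constant depending only on $d,p,\lambda,\Lambda$.

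I expect the only non-routine step to be the transfer of the Lipschitz bound from $u_j$ to the gradient of the test function: this is exactly what prevents $\mathcal{H}_j(x_0,D\varphi(x_0))$ from degenerating. Without such a bound, $|D\varphi(x_0)|$ could be made arbitrarily large and, since $p<0$, the quantity $(|D\varphi(x_0)|+c_j)^{p}$ would become arbitrarily small, so that the factor $\mathcal{H}_j^{-1}$ would blow up and destroy the $j$-uniformity. Once the Lipschitz transfer is established, the remainder of the argument is a direct reduction to the standard nonlocal $C^{1,\alpha}$ theory.
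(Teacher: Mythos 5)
Your proposal is correct and follows essentially the same route as the paper: use the $j$-uniform Lipschitz bound of Lemma \ref{lip_reg_sing} to bound $|D\varphi|$ at touching points, exploit $p<0$ and $c_j\leq 1$ to get a uniform lower bound on the coefficient, divide the viscosity inequalities to place $u_j$ in a uniformly elliptic nonlocal class with right-hand side controlled by $\|f\|_{L^\infty(B_1)}$, and invoke the Caffarelli--Silvestre $C^{1,\alpha}$ estimates. The only (harmless) difference is that you obtain $|D\varphi(x_0)|\leq M$ by a one-sided first-order expansion against the Lipschitz bound, whereas the paper uses the qualitative $C^{1,\alpha}$ regularity of $u_j$ for fixed $j$ to write $|D\varphi_j(x_j)|=|Du_j(x_j)|\leq M$, and it cites \cite{Caffarelli-Silvestre2011} rather than \cite{Caffarelli-Silvestre2009} for the final estimate.
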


\begin{proof}
Let $\varphi_j \in C^2(\mathbb{R}^d)$ be a test function that touches $u_j$ from below at $x_j$. We write the viscosity inequality 
\[
-\left[(|D\varphi_j(x_j)| + c_j)^p +a(x_j)(|D\varphi_j(x_j)| + c_j)^q\right]{\mathcal I}_{\delta}(u_j, \varphi_j, x_j) \leq f(x_j).
\]
It implies that 
\[
-{\mathcal I}_{\delta}(u_j, \varphi_j, x_j) \leq (|D\varphi_j(x_j)|+c_j)^{-p}\|f\|_{L^{\infty}(B_1)} \leq (|D\varphi_j(x_j)|+1)^{-p}\|f\|_{L^{\infty}(B_1)}.
\]
Since $u_j$ is of class $C^{1,\alpha}_{loc}(B_1)$ and $\varphi_j$ touches $u_j$ from below at $x_j$, we can conclude from Lemma  \ref{lip_reg_sing} that $|D\varphi_j(x_j)| = |Du_j(x_j)| \leq M$. Hence
\[
-{\mathcal I}_{\sigma}(u_j,x) \leq M^{-p}\|f\|_{L^{\infty}(B_1)}.
\] 
Similarly, we can show that
\[
-{\mathcal I}_{\sigma}(u_j,x) \geq -M^{-p}\|f\|_{L^{\infty}(B_1)}.
\]
Therefore by \cite[Theorem 52]{Caffarelli-Silvestre2011} we have that $u_j \in C^{1,\alpha}_{loc}(B_1)$ and
\[
\|u_j\|_{C^{1,\alpha}(B_{1/2})} \leq C(\|u_j\|_{L^{\infty}(B_1)} + \|u_j\|_{L^1_ \sigma}+ \|f\|_{L^{\infty}(B_1)}),
\] 
where $C>0$ is a universal constant.
\end{proof}

At this point, we are able to prove the Theorem \ref{Singular Theorem}.

\begin{proof}[Proof of Theorem \ref{Singular Theorem}]
From Definition \ref{new_def}, there exist  sequences $(u_j)_{j\in \mathbb{N}} \in C(\overline{B}_1)\cap L^{1}_{\sigma}(\mathbb{R}^d)$, and $(c_j)_{j\in \mathbb{N}} \in \mathbb{R}^+$ fulfilling $ u_j$ converges locally uniformly to $u$ in  $ B_1$ and $c_j \to 0$, such that $u_j$ is a viscosity solution to
\[
-\left((|Du_j|+c_j)^p + a(x)(|Du_j|+c_j)^q\right){\mathcal I}_\sigma(u_j,x) =  f \;\;\mbox{ in }\;\;B_1.
\]

It follows from Proposition \ref{prop_ur} that $u_j$ is of class $C^{1, \alpha}_{loc}$ with estimates
\[
\|u_j\|_{C^{1,\alpha}(B_{1/2})} \leq C(\|u_j\|_{L^{\infty}(B_1)} + \|f\|_{L^{\infty}(B_1)}).
\]
where $C>0$ is a constant that does not depend on $j$.

By applying the limit in the estimate above as $j \rightarrow \infty$ combined with the fact that $u_j$ converges locally uniformly to $u$ in $B_1$, we obtain the following estimate
\[
\|u\|_{C^{1,\alpha}(B_{1/2})} \leq C(\|u\|_{L^{\infty}(B_1)} + \|u\|_{L^1_ \sigma} + \|f\|_{L^{\infty}(B_1)}).
\]
This completes the proof of the theorem.
\end{proof}

\bigskip
{\bf Acknowledgement:} PA was partially supported by  CAPES-INCTMat - Brazil and by the Portuguese government through FCT-Funda\c c\~ao para a Ci\^encia e a Tecnologia, I.P., under the project UID/MAT/04459/2020. D dos P was partially supported by CNPq and CAPES/Fapitec. MS was partially supported by FAPESP grant 2021/04524-0 and by the Portuguese government through FCT-Funda\c c\~ao para a Ci\^encia e a Tecnologia, I.P., under the projects UID/MAT/04459/2020, and PTDC/MAT-PUR/1788/2020. This study was financed in part by the Coordena\c c\~ao de Aperfei\c coamento de Pessoal de N\'ivel Superior - Brazil (CAPES) - Finance Code 001.

\bibliography{andrade_prazeres_santos}

\bibliographystyle{plain}

\bigskip

\noindent\textsc{P\^edra D. S. Andrade}\\
Instituto Superior T\'ecnico\\
Universidade de Lisboa -- ULisboa\\
 1049-001,  Av. Rovisco Pais, Lisboa, Portugal\\
\noindent\texttt{pedra.andrade@tecnico.ulisboa.pt}

\vspace{.15in}

\noindent\textsc{Disson S. dos Prazeres}\\
Departamento de Matem\'atica\\
Universidade Federal de Sergipe -- UFS\\
49100-000, Roza Elze, S\~ao Cristov\~ao - SE, Brazil\\
\noindent\texttt{disson.mat.ufs.br}

\vspace{.15in}

\noindent\textsc{Makson S. Santos}\\
Instituto Superior T\'ecnico\\
Universidade de Lisboa -- ULisboa\\
 1049-001,  Av. Rovisco Pais, Lisboa, Portugal\\
\noindent\texttt{makson.santos@tecnico.ulisboa.pt}


\end{document}